\documentclass{amsart}

\usepackage{amssymb}
\usepackage{amsmath}
\usepackage{amsthm}

\newtheorem {theorem}{Theorem}
\newtheorem {proposition}{Proposition}
\newtheorem {corollary}{Corollary}
\newtheorem {lemma}{Lemma}

\newtheorem{prb}{Problem}
\newtheorem{rem}{Remark}

\numberwithin{equation}{section}

\newcommand{\sln}{\mathrm{SL}_n}
\newcommand{\gln}{\mathrm{GL}_n}
\newcommand{\glz}{\mathrm{GL}_2}
\newcommand{\sla}{\mathrm{SL}}
\newcommand{\slz}{\mathrm{SL}_2}
\newcommand{\ut}{\mathrm{ut}}
\newcommand{\EE}{E}
\newcommand{\bsr}{\mathrm{bsr}}
\newcommand{\Tbb}{\mathbb{T}}
\newcommand{\OO}{\mathcal O}
\newcommand{\Dbb}{\mathbb D}
\newcommand{\Rbb}{\mathbb R}
\newcommand{\Cbb}{\mathbb C}
\newcommand{\Nbb}{\mathbb N}

\newcommand{\za}{\zeta}

\begin{document}

\title[Factorization by elementary matrices]
{Factorization by elementary matrices, null-homotopy
and products of exponentials for invertible matrices over rings}

\author{Evgueni Doubtsov}

\address{St.~Petersburg Department
of V.A.~Steklov Mathematical Institute,
Fontanka 27, St.~Petersburg 191023, Russia}

\email{dubtsov@pdmi.ras.ru}

\author{Frank Kutzschebauch}

\address{Institute of Mathematics,
              University of Bern,
              Sidlerstrasse 5, CH-3012 Bern,
              Swit\-zerland}

\email{frank.kutzschebauch@math.unibe.ch}

\thanks{Frank Kutzschebauch
was supported by Schweizerische Nationalfonds Grant 200021-178730.}

\subjclass[2010]{Primary 15A54; Secondary 15A16, 30H50, 32A38, 32E10, 46E25}

\date{}

\begin{abstract}
Let $R$ be a commutative unital ring.
A well-known factorization problem is whether any matrix in $\mathrm{SL}_n(R)$
is a product of elementary matrices with entries in $R$.
To solve the problem, we use two approaches based on the notion
of the Bass stable rank and on construction of a null-homotopy.
Special attention is given to the case, where $R$
is a ring or Banach algebra of holomorphic functions.
Also, we consider a related problem on representation
of a matrix in $\mathrm{GL}_n(R)$
as a product of exponentials.
\end{abstract}

\maketitle

\section{Introduction}\label{s_int}

Let $R$ be an associative, commutative, unital ring.
A well-known factorization problem is whether any matrix in $\sln(R)$
is a product of elementary (equivalently, unipotent)
matrices with entries in $R$.
Here the elementary matrices are those which have units on the diagonal 
and zeros outside the diagonal, except one non-zero entry.
In particular, for $n=3,4,\dots$, Suslin \cite{Su77} proved that the problem is solvable
for the polynomials rings $\Cbb[\Cbb^m]$, $m\ge 1$.
For $n=2$, the required factorization for $R= \Cbb[\Cbb^m]$ does not always exist;
the first counterexample was constructed by Cohn \cite{Co66}.

In the present paper, we primarily consider the case, where
$R$ is a functional Banach algebra.
So, let $\OO(\Dbb)$ denote the space of holomorphic functions
on the unit disk $\Dbb$ of $\Cbb$.
Recall that the disk-algebra
$A(\Dbb)$ consists of $f\in\OO(\Dbb)$
extendable up to continuous functions on the closed disk $\overline{\Dbb}$.
The disk-algebra $A(\Dbb)$
and the space $H^\infty(\Dbb)$ of bounded holomorphic functions on $\Dbb$
 may serve as good working examples
for the algebras under consideration.

In fact, we propose two approaches to the factorization problem.
The first one is based on construction of a null-homotopy; see Section~\ref{s_null}.
This method applies to the disk-algebra and similar algebras.
The second approach is applicable to rings whose Bass stable rank is equal to one; see Section~\ref{s_suff}.
This methods applies, in particular, to $H^\infty(\Dbb)$.

Also, the factorization problem is closely related to the following natural question:
whether a matrix $F\in \gln(R)$ is representable as a product of exponentials,
that is, $F = \exp G_1\dots \exp G_k$ with $G_j \in M_n(R)$.
For $n=2$ and matrices with entries in a Banach algebra,
this question was recently considered in \cite{MR18}.
In Section~\ref{s_exp}, we obtain results related to this question
with emphasis on the case, where
$R=\OO(\Omega)$ and $\Omega$
is an open Riemann surface.

\section{Factorization and null-homotopy}\label{s_null}

Given $n\ge 2$ and an associative, commutative, unital ring $R$,
let $\EE_n(R)$ denote the set of those $n\times n$ matrices
which are representable as products of elementary
matrices with entries in $R$.

For a unital commutative Banach algebra $R$,
an element $X\in \sln(R)$ is said to be null-homotopic if $X$
is homotopic to the unity matrix, that is, there exists
a homotopy $X_t : [0,1] \to \sln(R)$ such that $X_1=X$ and $X_0$ is the unity matrix.

We will use the following theorem:

\begin{theorem}[{\cite[\S 7]{Mil71}}]\label{t_homot}
Let $A$ be a unital commutative Banach algebra and let $X\in \sln(A)$.
The following properties are equivalent:
\begin{itemize}
  \item[(i)] $X\in\EE_n(A)$;
  \item[(ii)] $X$ is null-homotopic.
\end{itemize}
\end{theorem}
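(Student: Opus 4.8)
The plan is to establish the two implications separately, putting essentially all of the work into (ii) $\Rightarrow$ (i). For (i) $\Rightarrow$ (ii) I would simply note that each elementary matrix $I + a\,e_{ij}$ (with $i\ne j$ and $a\in A$) is joined to the unity matrix by the path $t\mapsto I + ta\,e_{ij}$, which lies in $\sln(A)$ for every $t\in[0,1]$. If $X = g_1\cdots g_k$ is a product of elementary matrices $g_m = I + a_m e_{i_m j_m}$, then $t\mapsto \prod_{m=1}^{k}\bigl(I + t a_m e_{i_m j_m}\bigr)$ is a homotopy in $\sln(A)$ from the unity matrix at $t=0$ to $X$ at $t=1$, so $X$ is null-homotopic.

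The real content is (ii) $\Rightarrow$ (i), and the step I expect to be the main obstacle is the following local factorization lemma: \emph{$\EE_n(A)$ contains a norm-neighborhood of the unity matrix in $\sln(A)$.} To prove it I would run a controlled Gauss elimination. If $\|X - I\|$ is small, each diagonal entry $x_{ii}$ is close to the unit $1\in A$ and is therefore invertible, its inverse being furnished by a Neumann series since $A$ is a Banach algebra, while the off-diagonal entries are small. Using the pivot $x_{11}$ I would clear the rest of the first column by left multiplications with the elementary matrices $I - x_{i1}x_{11}^{-1}e_{i1}$, and clear the rest of the first row by the analogous right multiplications; this reduces $X$, modulo elementary factors, to block form $\mathrm{diag}(x_{11}, X')$ with $X'\in \mathrm{SL}_{n-1}(A)$ again close to the unity matrix. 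Iterating and keeping the estimates under control so that every successive pivot remains invertible — this uniform control, for a fixed finite number of steps with $\|X-I\|$ small enough, is the delicate technical point — I reduce $X$ to a diagonal matrix $\mathrm{diag}(u_1,\dots,u_n)$ with units $u_j$ and $\prod_j u_j = 1$. Finally the Whitehead identity $\mathrm{diag}(u,u^{-1})\in \EE_2(A)$, valid for any unit $u$, together with the relation $\prod_j u_j = 1$, exhibits $\mathrm{diag}(u_1,\dots,u_n)$ itself as a product of elementary matrices, which completes the lemma.

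Granting the lemma, $\EE_n(A)$ is an \emph{open} subgroup of $\sln(A)$: it contains a neighborhood of the unity matrix, and since it is a subgroup, translation by any of its elements shows it is open at every point. Hence each coset $g\,\EE_n(A)$ is open, and the cosets partition $\sln(A)$. Now suppose $X$ is null-homotopic, with a homotopy $X_t$ satisfying $X_0 = I$ and $X_1 = X$. The sets $\{t\in[0,1] : X_t \in g\,\EE_n(A)\}$, as $g$ ranges over coset representatives, form a partition of $[0,1]$ into relatively open subsets; since $[0,1]$ is connected and $X_0 = I\in \EE_n(A)$, the entire path must stay in the coset $\EE_n(A)$. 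In particular $X = X_1\in \EE_n(A)$, which is (i). This closes the equivalence, and as indicated the only real difficulty lies in the pivot control within the elimination lemma.
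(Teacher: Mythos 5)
Your proof is correct, and it is worth noting that the paper itself offers no argument for this statement: it is imported verbatim from Milnor's book (\S 7 of \cite{Mil71}), so the only comparison available is with the standard proof found there --- which is essentially what you have reconstructed. Both implications are handled the right way: the homotopy $t\mapsto \prod_m (I + t a_m e_{i_m j_m})$ for (i)$\Rightarrow$(ii), and for (ii)$\Rightarrow$(i) the key lemma that $\EE_n(A)$ contains a norm-neighborhood of the identity in $\sln(A)$ (Gaussian elimination with Neumann-series pivots, reduction to a diagonal of units, then the Whitehead identity $\mathrm{diag}(u,u^{-1})\in \EE_2(A)$), followed by the open-subgroup/clopen-coset argument on the connected interval $[0,1]$. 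One small inaccuracy: after clearing the first row and column, the Schur complement $X'$ satisfies $x_{11}\det X' = 1$, so $X'$ lies in $\mathrm{GL}_{n-1}(A)$ with $\det X' = x_{11}^{-1}$, not in $\mathrm{SL}_{n-1}(A)$ as you wrote. This is harmless: the elimination only requires invertible pivots, and determinant one is used only at the very end, where $\prod_j u_j = \det X = 1$ makes the telescoping product of embedded matrices $\mathrm{diag}(w_k, w_k^{-1})$ work. Your acknowledged ``delicate point'' --- uniform pivot control over the finitely many elimination steps --- is indeed routine for fixed $n$ once $\|X-I\|$ is small enough, so the sketch closes without any genuine gap.
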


To give an illustration of Theorem~\ref{t_homot}, consider
the disk-algebra $A(\Dbb)$.

\begin{corollary}\label{c_AD}
For $n=2,3,\dots$, $\EE_n(A(\Dbb)) = \sln(A(\Dbb))$.
\end{corollary}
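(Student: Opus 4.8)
The plan is to verify condition (ii) of Theorem~\ref{t_homot} for every $X\in\sln(A(\Dbb))$ and then invoke the implication (ii)$\Rightarrow$(i); the reverse inclusion $\EE_n(A(\Dbb))\subseteq\sln(A(\Dbb))$ is immediate, since each elementary matrix has determinant $1$ and $\sln$ is a group. So the whole content is to show that an arbitrary $X\in\sln(A(\Dbb))$ is null-homotopic.

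First I would unwind what such an $X$ is concretely. A matrix $X\in M_n(A(\Dbb))$ is the same datum as a matrix-valued map $z\mapsto X(z)$ that is continuous on $\overline{\Dbb}$ and holomorphic on $\Dbb$. The condition $\det X=1$ in $A(\Dbb)$ says that the scalar function $z\mapsto\det X(z)$ is identically equal to $1$, and hence $X(z)\in\sln(\Cbb)$ for every $z\in\overline{\Dbb}$. This pointwise picture is what makes the homotopy available.

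The key idea is to contract the disk to its center by dilation. For $t\in[0,1]$ set $X_t(z):=X(tz)$. Since $z\mapsto tz$ maps $\overline{\Dbb}$ into $\overline{\Dbb}$ and $\Dbb$ into $\Dbb$, each $X_t$ again lies in $M_n(A(\Dbb))$, and $\det X_t(z)=\det X(tz)=1$, so $X_t\in\sln(A(\Dbb))$. Here $X_1=X$, while $X_0$ is the constant matrix $X(0)\in\sln(\Cbb)$; continuity of $t\mapsto X_t$ in the sup-norm of $A(\Dbb)$ follows from the uniform continuity of $X$ on the compact set $\overline{\Dbb}$. Thus $X$ is homotopic inside $\sln(A(\Dbb))$ to the constant matrix $X(0)$. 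It then remains to connect $X(0)$ to the unity matrix, which I would do using the path-connectedness of $\sln(\Cbb)$: pick a path $\gamma\colon[0,1]\to\sln(\Cbb)$ with $\gamma(0)=X(0)$ and $\gamma(1)$ the identity, and regard each $\gamma(s)$ as a constant element of $\sln(A(\Dbb))$. Concatenating the two homotopies exhibits $X$ as null-homotopic, so $X\in\EE_n(A(\Dbb))$ by Theorem~\ref{t_homot}.

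I do not expect a serious obstacle: the argument is essentially the contractibility of $\overline{\Dbb}$, packaged through the dilations $z\mapsto tz$, together with the connectedness of $\sln(\Cbb)$. The only points needing a little care are that the dilated matrices remain genuinely in the disk-algebra (holomorphic on $\Dbb$, not merely continuous) and that the resulting homotopy is norm-continuous; both reduce to standard properties of $A(\Dbb)$ and of uniform continuity on $\overline{\Dbb}$.
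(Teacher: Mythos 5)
Your proposal is correct and follows essentially the same route as the paper: the dilation homotopy $X_t(z)=X(tz)$ connecting $X$ to the constant matrix $X(0)$ within $\sln(A(\Dbb))$, followed by a path in the connected group $\sln(\Cbb)$ to reach the identity, and then Theorem~\ref{t_homot}. Your added remarks on uniform continuity and on why $X_t$ stays in the disk-algebra are just slightly more explicit versions of the paper's observation that $\|f_t-f\|_{A(\Dbb)}\to 0$ as $t\to 1-$.
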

\begin{proof}
We have to show that $\EE_n(A(\Dbb)) \supset \sln(A(\Dbb))$.
So, assume that
\[
F = F(z) = \begin{pmatrix}
  f_{11}(z) & &f_{1n}(z) \\
       &\ddots & \\
  f_{n1}(z) & &f_{nn}(z)
\end{pmatrix}
\in \sln(A(\Dbb)).
\]

Define
\begin{equation}\label{e_Xt}
F_t(z) = F(tz)\in \sln(A(\Dbb)), \quad 0\le t \le 1, \ z\in\Dbb.
\end{equation}
Given an $f\in A(\Dbb)$, let $f_t(z) = f(tz)$, $0\le t \le 1$, $z\in \Dbb$.
Observe that $\|f_t - f\|_{A(\Dbb)} \to 0$ as $t\to 1-$.
Applying this observation to the entries of $F_t$, we conclude that $F$
is homotopic to the constant matrix $F(0)$.
Since $\sln(\Cbb)$ is path-connected, the constant matrix $F(0)$
is homotopic to the unity matrix.
So, it remains to apply Theorem~\ref{t_homot}.
\end{proof}

\section{Factorization and Bass stable rank}\label{s_suff}

\subsection{Definitions}
Let $R$ be a commutative unital ring. An element
$(x_1, \dots, x_k) \in R^k$ is called \textsl{unimodular} if
\[
\sum_{j=1}^k x_j R =R.
\]
Let $U_k(R)$ the set of all unimodular elements in $R^k$.

An element $x=(x_1, \dots, x_{k+1}) \in U_{k+1}(R)$ is called \textsl{reducible}
if there exists $(y_1, \dots, y_k)\in R^k$ such that
\[
(x_1 + y_1 x_{k+1}, \dots, x_k + y_k x_{k+1}) \in U_k (R).
\]
The \textsl{Bass stable rank} of $R$, denoted by $\bsr(R)$ and introduced in \cite{Ba64}, is the least $k\in\Nbb$ such that every $x\in U_{k+1} (R)$ is reducible. If there is no such $k\in\Nbb$,
then we set $\bsr(R) =\infty$.

\begin{rem}\label{r_bsr1}
The identity $\bsr(R) =1$ is equivalent to the following property:
For any $x_1, x_2\in R$ such that $x_1R + x_2R =R$,
there exists $y\in R$ such that $x_1 + y x_2 \in R^*$.
\end{rem}

\subsection{A sufficient condition for factorization}

\begin{theorem}\label{t_sbr1_2}
Let $R$ be a unital commutative ring and $n\ge 2$.
If $\bsr (R) =1$, then $\EE_n(R) = \sln(R)$.
\end{theorem}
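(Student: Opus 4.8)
The plan is to prove the nontrivial inclusion $\sln(R)\subseteq\EE_n(R)$; the reverse inclusion is immediate, since every elementary matrix has determinant $1$. Everything rests on the following reduction lemma, which is where the hypothesis $\bsr(R)=1$ enters: \emph{for every $m\ge 2$, any unimodular column $v\in U_m(R)$ can be carried to the first standard basis vector $e_1$ by left multiplication by a product of elementary matrices} (equivalently, $\EE_m(R)$ acts transitively on $U_m(R)$). Granting the lemma, I would induct on $n$. For $X\in\sln(R)$ the first column is unimodular, because the $(1,1)$ entry of $X^{-1}X=I$ exhibits $1$ as an $R$-combination of the entries of that column. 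Hence there is $L\in\EE_n(R)$ such that $LX$ has first column $e_1$, and right multiplication by suitable elementary matrices clears the rest of the first row, giving
\[
LXR=\begin{pmatrix}1 & 0\\ 0 & Y\end{pmatrix}, \qquad L,R\in\EE_n(R).
\]
Multiplicativity of the determinant forces $Y\in\mathrm{SL}_{n-1}(R)$, so by the induction hypothesis $Y\in\EE_{n-1}(R)$, whence the block matrix, and therefore $X$, lies in $\EE_n(R)$. The base case $n=2$ produces a block with $\mathrm{SL}_1(R)=\{1\}$, so no further argument is needed.

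It remains to prove the reduction lemma, which I would carry out by induction on the length $m$. For $m=2$, Remark~\ref{r_bsr1} provides $y\in R$ with $u:=v_1+yv_2\in R^*$; the elementary operation $v_1\mapsto v_1+yv_2$ turns the pair into $(u,v_2)$ with $u$ a unit, after which $(u,v_2)\mapsto(u,0)$ (subtract $v_2u^{-1}$ times the first entry from the second) and a short chain of elementary steps carries $(u,0)$ to $(1,0)=e_1$. For $m\ge 3$ the crucial point is that $\bsr(R)=1$ makes every unimodular vector of length $m$ \emph{reducible}: there exist $t_1,\dots,t_{m-1}\in R$ with $(v_1+t_1v_m,\dots,v_{m-1}+t_{m-1}v_m)\in U_{m-1}(R)$. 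The operations $v_i\mapsto v_i+t_iv_m$ are elementary, and once the leading $m-1$ entries are unimodular the last entry $v_m$ is cleared by elementary row operations; this reduces the length to $m-1$, and the induction proceeds.

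The step I expect to be the main obstacle is exactly this reducibility for lengths $m\ge 3$. The hypothesis $\bsr(R)=1$ and its reformulation in Remark~\ref{r_bsr1} are statements about \emph{pairs}, and one must bootstrap them to unimodular vectors of arbitrary length. This stability property of the Bass stable rank is standard but genuinely requires proof; I would either invoke it from the foundational theory of the stable rank (Bass~\cite{Ba64} and Vaserstein) or establish it by the standard auxiliary induction on the length $m$, applying the length-two hypothesis in a suitable quotient of $R$. Everything else in the argument is routine bookkeeping with elementary operations.
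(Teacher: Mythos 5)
Your proof is correct in outline, but it takes the classical $K$-theoretic route rather than the paper's, and the difference is worth spelling out. You reduce everything to transitivity of $\EE_m(R)$ on unimodular columns and, for $m\ge 3$, rest the crucial step on the stability theorem for the stable rank (Vaserstein): if $\bsr(R)=1$ then every unimodular vector of any length $m\ge 2$ is reducible. That theorem is standard and citable, so your argument closes — but be warned that your fallback sketch (``apply the length-two hypothesis in a suitable quotient of $R$'') skirts a circularity: the monotonicity $\bsr(R/I)\le\bsr(R)$ it requires is itself usually derived from the very stability theorem you are trying to reprove, so invoking Vaserstein outright is the safe move, and you correctly identified this as the main obstacle. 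The paper sidesteps the issue entirely with one trick: since $\det X=1$, there are Bezout coefficients with $\alpha_1x_{11}+\dots+\alpha_nx_{n1}=1$, and one groups the first $n-1$ entries of the first column into the \emph{single} ring element $s=\sum_{i=1}^{n-1}\alpha_ix_{i1}$, so that $(x_{n1},s)$ is a unimodular \emph{pair}; Remark~\ref{r_bsr1} alone then yields $y$ with $\alpha=x_{n1}+ys\in R^*$, and the corresponding row operation is implemented by one unipotent lower-triangular matrix with last row $(\alpha_1y,\dots,\alpha_{n-1}y,1)$. This collapses length-$m$ reducibility to the length-two hypothesis, keeps the proof self-contained, and — importantly for the rest of the paper — produces the explicit bound that every $X\in\sln(R)$ is a product of $2n$ unipotent upper or lower triangular matrices (in particular $\ut_2(R)=4$), which is precisely what feeds Corollaries~\ref{c_exp1} and~\ref{c_glz_4} and the six-exponentials estimate via \cite{DV88}. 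Your route buys the standard, more broadly applicable machinery (column reduction plus stable-rank stability, which generalizes beyond $\bsr(R)=1$), but as written it proves only the set equality and yields no factor count without extra bookkeeping, and it imports a nontrivial external theorem that the paper's grouping trick renders unnecessary.
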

\begin{proof}
First, assume that $n=2$. 
Let
\[
X = \begin{pmatrix}
  x_{11} &x_{12} \\
  x_{21} &x_{22}
\end{pmatrix}
\in \slz(R).
\]
Since $\det X =1$, we have
\[
x_{21}R + x_{11}R = R.
\]
Hence, using the assumption $\bsr(X)=1$ and Remark~\ref{r_bsr1}, we conclude that
there exists $y\in R$ such that
\begin{equation}\label{e_bsr1}
\alpha = x_{21} + y x_{11} \in R^{*}.
\end{equation}

Now, we have
\[
\begin{pmatrix}
  1      &0 \\
  y      &1
\end{pmatrix}
X =
\begin{pmatrix}
  x_{11} &x_{12} \\
  \alpha &*
\end{pmatrix}.
\]
Next, using \eqref{e_bsr1}
we obtain
\[
\begin{pmatrix}
  1      &(1-x_{11})\alpha^{-1} \\
  0      &1
\end{pmatrix}
\begin{pmatrix}
  x_{11} &x_{12} \\
  \alpha &*
\end{pmatrix}
=
\begin{pmatrix}
  1      &* \\
  \alpha &*
\end{pmatrix}.
\]
Finally, we have
\[
\begin{pmatrix}
  1       &0 \\
  -\alpha &1
\end{pmatrix}
\begin{pmatrix}
  1      &* \\
  \alpha &*
\end{pmatrix}
=
\begin{pmatrix}
  1      &* \\
  0      &x_0
\end{pmatrix}.
\]
Since the determinant of the last matrix is equal to one, we conclude that $x_0=1$.
Therefore, the $X$ is representable as a product of four multipliers.

For $n\ge 3$, let
\[
X = \begin{pmatrix}
  x_{11} & \\
  \vdots &* \\
  x_{n1} &
\end{pmatrix}
\in \sln(R).
\]
Since $\det X =1$, there exist $\alpha_1, \dots, \alpha_n\in R$ such that
$\alpha_1 x_{11}+\dots + \alpha_{n-1}x_{n-1 1} + \alpha_n x_{n 1} =1$.
Therefore,
\[
x_{n 1}R + \left(\sum_{i=1}^{n-1}\alpha_{i}x_{i 1}  \right)R = R.
\]
Applying the property $\bsr R =1$, we obtain $y\in R$ such that
\[
x_{n 1} + y \left(\sum_{i=1}^{n-1}\alpha_{i}x_{i 1}  \right) := \alpha \in R^*.
\]
Put
\[
L=
\begin{pmatrix}
   1       &           & &\\
           &1          &\mathbf{0} & \\
           &\mathbf{0} &\ddots &\\
\alpha_1 y &\dots &\alpha_{n-1}y &1
\end{pmatrix}.
\]
Then
\[
LX =
\begin{pmatrix}
  x_{11} & \\
  \vdots &* \\
  x_{n-1 1} & \\
  \alpha &
\end{pmatrix}.
\]
Multiplying by the upper triangular matrix
\[
U_1
=\begin{pmatrix}
   1       &          & &           &(1-x_{11})\alpha^{-1}\\
           &1         & &\mathbf{0} &-x_{21}\alpha^{-1} \\
         &\mathbf{0} &\ddots & &\dots \\
         & & &1 &-x_{n-1 1}\alpha^{-1}\\
         & & & &1
\end{pmatrix},
\]
we obtain
\[
U_1 LX =
\begin{pmatrix}
  1 & \\
  0      & \\
  \vdots &* \\
  0      & \\
  \alpha &
\end{pmatrix}.
\]
Now, put
\[
\widetilde{L}
=
\begin{pmatrix}
   1       &           & &\\
           &1          &\mathbf{0} & \\
   0      &\mathbf{0} &\ddots &\\
-\alpha    &0 & &1
\end{pmatrix}.
\]
We have
\[
\widetilde{L} U_1 LX =
\begin{pmatrix}
   1       & *         &* &* \\
    0       &          & & \\
    \vdots  & &Y_1 &\\
   0        & & &
\end{pmatrix}.
\]
Observe that $Y_1\in \mathrm{SL}_{n-1}(R)$.
So, arguing by induction, we obtain
\[
\left(\prod_{i=1}^{n-1} \widetilde{L}_i U_i L_i\right)X =
\begin{pmatrix}
   1       &       &*\\
           &\ddots &\\
    \mathbf{0}       &           &1
\end{pmatrix}
:=U
\]
or, equivalently,
\[
\left(\prod_{i=1}^{n-1} \mathcal{L}_i U_i \right) L_{n-1} X =U,
\]
where $\mathcal{L}_i$ are lower triangular matrices.
So, we conclude that every $X\in\sln(R)$ is a product of $2n$
unipotent upper and lower triangular matrices.
\end{proof}

\begin{corollary}\label{c_sbr1_2}
Let $A$ be a unital commutative Banach algebra such that $\bsr (A) =1$.
If $X\in \sln(A)$, then $X$ is null-homotopic.
\end{corollary}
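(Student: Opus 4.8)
The plan is to combine Theorem~\ref{t_sbr1_2} with Theorem~\ref{t_homot}: the hypothesis $\bsr(A)=1$ is precisely what the former requires, and the conclusion we seek is precisely what the latter supplies. So the corollary should follow simply by concatenating these two results.

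First I would observe that a unital commutative Banach algebra is in particular a unital commutative ring, so the standing hypotheses of Theorem~\ref{t_sbr1_2} are met for $R=A$. Applying that theorem together with the assumption $\bsr(A)=1$ yields $\EE_n(A)=\sln(A)$. In particular, the given matrix $X\in\sln(A)$ lies in $\EE_n(A)$; that is, property~(i) of Theorem~\ref{t_homot} holds for $X$. It then remains to invoke the implication (i)$\Rightarrow$(ii) of Theorem~\ref{t_homot}, whose hypotheses---a unital commutative Banach algebra $A$ and a matrix $X\in\sln(A)$---are satisfied verbatim. This gives that $X$ is null-homotopic, as desired.

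I do not expect any genuine obstacle here: the mathematical content has already been carried by Theorems~\ref{t_homot} and~\ref{t_sbr1_2}, and the corollary is merely their composition. The only point worth a moment's attention is the compatibility of hypotheses, namely that the ring-theoretic Theorem~\ref{t_sbr1_2} may legitimately be applied to the Banach algebra $A$; but this is immediate, since one simply forgets the norm and uses $A$ as a commutative unital ring.
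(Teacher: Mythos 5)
Your proof is correct and is exactly the paper's argument: the paper's proof reads, in full, ``It suffices to combine Theorems~\ref{t_homot} and \ref{t_sbr1_2},'' and you have simply spelled out that combination (Theorem~\ref{t_sbr1_2} gives $X\in\EE_n(A)$, and the implication (i)$\Rightarrow$(ii) of Theorem~\ref{t_homot} gives null-homotopy). Your remark that one forgets the norm to apply the ring-theoretic theorem is a fine, if routine, hypothesis check.
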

\begin{proof}
It suffices to combine Theorems~\ref{t_homot} and \ref{t_sbr1_2}.
\end{proof}

\newpage

\subsection{Examples of algebras $A$ with $\bsr(A)=1$}

\subsubsection{Disk-algebra $A(\Dbb)$}
By Corollary~\ref{c_AD}, $\EE_n(A(\Dbb)) = \sln(A(\Dbb))$.
Theorem~\ref{t_sbr1_2} provides a different proof of this property.
Indeed, Jones, Marshall and Wolff \cite{JMW86} and Corach and Su\'{a}rez \cite{CS85}
proved that $\bsr(A(\Dbb))=1$, so Theorem~\ref{t_sbr1_2} applies.

\subsubsection{Algebra $H^\infty(\Dbb)$}
Let $f\in H^\infty(\Dbb)$. If $\|f_r -f\|_\infty \to 0$ as $r\to 1-$,
then clearly $f\in A(\Dbb)$. So the homotopy argument used for $A(\Dbb)$
is not applicable to $H^\infty(\Dbb)$. However, Treil \cite{Tr92}
proved that $\bsr(H^\infty(\Dbb))=1$, hence, Theorem~\ref{t_sbr1_2}
holds for $R=H^\infty(\Dbb)$. Also,
Corollary~\ref{c_sbr1_2} guarantees that any
$F\in \sln(H^\infty(\Dbb))$ is null-homotopic.

\subsubsection{Generalizations of $H^\infty(\Dbb)$}
Tolokonnikov \cite{To95} proved that $\bsr(H^\infty(G))=1$
for any finitely connected open Riemann surface $G$ and for certain
infinitely connected planar domains $G$ (Behrens domains).
In particular, any $F\in \sln(H^\infty(G))$ is null-homotopic.
However, even in the case $G=\Dbb$ the homotopy in question is not explicit.
So, probably it would be interesting to give a more explicit construction
of the required homotopy.

Let $\Tbb=\partial\Dbb$ denote the unit circle.
Given a function $f\in H^\infty(\Dbb)$, it is well-known that
the radial limit $\lim_{r\to 1-} f(r\za)$ exists for almost all $\za\in\Tbb$
with respect to Lebesgue measure on $\Tbb$. So, let
$H^\infty(\Tbb)$ denote the space of the corresponding radial values.
It is known that $H^\infty(\Tbb) + C(\Tbb)$ is an algebra,
moreover, $\bsr(H^\infty(\Tbb) + C(\Tbb))=1$; see \cite{MW10}.

Now, let $B$ denote a Blaschke product in $\Dbb$.
Then $\Cbb + B H^\infty(\Dbb)$ is an algebra.
It is proved in \cite{MSW10} that $\bsr(\Cbb + B H^\infty(\Dbb))=1$.

\subsection{Examples of algebras $A$ with $\bsr(A)>1$}

\subsubsection{Algebra $A_{\Rbb}(\Dbb)$}
Each element $f$ of the disk-algebra $A(\Dbb)$ has a unique representation
\begin{equation}\label{e_f_series}
f(z) = \sum_{j=0}^\infty a_j z^j, \quad z\in \Dbb.
\end{equation}
The space $A_{\Rbb}(\Dbb)$ consists of those $f\in A(\Dbb)$ for which $a_j\in\Rbb$ for all $j=0,1\dots$
in \eqref{e_f_series}.
As shown in \cite{MW09}, $\bsr(A_{\Rbb}(\Dbb))=2$.
Nevertheless, the following result holds.

\begin{proposition}\label{p_A_Rbb}
For $n=2, 3, \dots$, $\EE_n(A_{\Rbb}(\Dbb)) = \sln(A_{\Rbb}(\Dbb))$.
\end{proposition}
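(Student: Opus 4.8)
The plan is to prove that $\EE_n(A_\Rbb(\Dbb)) = \sln(A_\Rbb(\Dbb))$ by constructing an explicit null-homotopy, mimicking the proof of Corollary~\ref{c_AD} but being careful to stay inside the real subalgebra $A_\Rbb(\Dbb)$. The point is that although $\bsr(A_\Rbb(\Dbb)) = 2$, so Theorem~\ref{t_sbr1_2} does not apply, the homotopy argument for $A(\Dbb)$ can be adapted, because the dilation operation $f \mapsto f_t$, $f_t(z) = f(tz)$, \emph{preserves} the reality of Taylor coefficients.

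Concretely, given $F \in \sln(A_\Rbb(\Dbb))$, I would first set $F_t(z) = F(tz)$ for $0 \le t \le 1$. Since each entry $f(z) = \sum_{j=0}^\infty a_j z^j$ with $a_j \in \Rbb$ satisfies $f_t(z) = \sum_{j=0}^\infty a_j t^j z^j$, the dilated function $f_t$ again has real Taylor coefficients; hence $F_t \in \sln(A_\Rbb(\Dbb))$ for every $t$. As in Corollary~\ref{c_AD}, the estimate $\|f_t - f\|_{A(\Dbb)} \to 0$ as $t \to 1-$ shows that $F$ is homotopic, within $\sln(A_\Rbb(\Dbb))$, to the constant matrix $F(0)$. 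Here the crucial observation is that $F(0)$ is a constant matrix with \emph{real} entries and determinant one, so $F(0) \in \sln(\Rbb)$, not merely $\sln(\Cbb)$.

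The remaining step is to connect $F(0) \in \sln(\Rbb)$ to the identity by a path inside $\sln(\Rbb)$, viewed as constant matrices inside $\sln(A_\Rbb(\Dbb))$. This requires that $\sln(\Rbb)$ be path-connected, which is indeed true: $\sln(\Rbb)$ is a connected Lie group. Concatenating this real path with the dilation homotopy yields a homotopy in $\sln(A_\Rbb(\Dbb))$ from $F$ to the unity matrix, so $F$ is null-homotopic, and Theorem~\ref{t_homot} then gives $F \in \EE_n(A_\Rbb(\Dbb))$.

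The main point to verify carefully, and the only place where the real case genuinely differs from the complex case of Corollary~\ref{c_AD}, is the path-connectedness of $\sln(\Rbb)$: over $\Cbb$ this is immediate, but over $\Rbb$ one must recall that $\sln(\Rbb)$ is still connected (for instance via the exponential map together with the fact that it is generated by real elementary matrices, or via the Iwasawa/polar decomposition). Everything else is a direct transcription of the disk-algebra argument, with the single recurring check that the dilation $f \mapsto f_t$ keeps us inside $A_\Rbb(\Dbb)$.
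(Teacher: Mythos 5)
Your proof is correct and takes essentially the same route as the paper: the dilation homotopy $F_t(z)=F(tz)$ stays in $\sln(A_{\Rbb}(\Dbb))$ because dilation preserves real Taylor coefficients, it connects $F$ to the constant matrix $F(0)$, and Theorem~\ref{t_homot} concludes. You are in fact slightly more careful than the paper at the final step: the paper writes $F_0\in\sln(\Cbb)$ and passes to the identity without comment, whereas you correctly note that the constant matrices lying in $\sln(A_{\Rbb}(\Dbb))$ have real entries, so the fact genuinely needed is the path-connectedness of $\sln(\Rbb)$, which you verify.
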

\begin{proof}
For a function $f\in A_{\Rbb}(\Dbb)$,
we have $f_t \in A_{\Rbb}(\Dbb)$ or all $0\le t < 1$.
Hence, given a matrix $F\in \sln(A_{\Rbb}(\Dbb))$, we have $F_t\in \sln(A_{\Rbb}(\Dbb))$,
where $F_t$ is defined by \eqref{e_Xt}.
Since $\|f_t - f\|_{A_{\Rbb}(\Dbb)} \to 0$ as $t\to 1-$, $F$ is homotopic to the constant matrix $F_0\in \sln(\Cbb)$.
Hence, $F$ is homotopic to the unity matrix.
Therefore, $F\in \EE_n(A_{\Rbb}(\Dbb))$ by Theorem~\ref{t_homot}.
\end{proof}

\subsubsection{Ball algebra $A(B^m)$, polydisk algebra $A(\Dbb^m)$, $m\ge 2$,
and infinite polydisk algebra $A(\Dbb^\infty)$}
Let $B^m$ denote the unit ball of $\mathbb{C}^m$, $m\ge 2$.
The ball algebra $A(B^m)$ and the polydisk algebra $A(\Dbb^m)$
are defined analogously to the disk-algebra $A(\Dbb)$.
By \cite[Corollary~3.13]{CS87},
\[\bsr(A(B^m)) = \bsr(A(\Dbb^m)) = \left[\frac{m}{2} \right] +1,\quad  m\ge 2.
\]
The infinite polydisk algebra $A(\Dbb^\infty)$
is the uniform closure of the algebra generated by the coordinate functions $z_1, z_2, \dots$
on the countably infinite closed polydisk $\overline{\Dbb}^\infty = \overline{\Dbb}\times \overline{\Dbb}\dots$.
Proposition~1 from \cite{Mo92} guarantees that $\bsr(A(\Dbb^\infty))=\infty$.
Large or infinite Bass stable rank of the algebras under consideration
is compatible with the following result.

\begin{proposition}
Let $n=2,3,\dots$. Then
\begin{align*}
  \EE_n(A(B^m)) &= \sln(A(B^m)), \quad m=2,3,\dots, \infty, \\
  \EE_n(A(\Dbb^m)) &= \sln(A(\Dbb^m)), \quad m=2,3,\dots, \infty.
\end{align*}
\end{proposition}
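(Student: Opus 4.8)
The plan is to reproduce the null-homotopy argument from Corollary~\ref{c_AD} and Proposition~\ref{p_A_Rbb}, the point being that a dilation homotopy works identically for all of these algebras once one checks that dilations stay inside the algebra and converge uniformly. First, I would treat the ball algebra $A(B^m)$ and the finite polydisk algebra $A(\Dbb^m)$ with $m<\infty$ together. Given $F\in\sln(A(B^m))$, define $F_t(z)=F(tz)$ for $0\le t\le 1$. Because $t\overline{B^m}\subset\overline{B^m}$ (and likewise $t\overline{\Dbb}^m\subset\overline{\Dbb}^m$), each entry $f_t(z)=f(tz)$ again belongs to the algebra, and $\det F_t=(\det F)_t\equiv 1$, so $F_t\in\sln$. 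Since the entries of $F$ are continuous on the compact set $\overline{B^m}$, they are uniformly continuous, whence $\|f_t-f\|_\infty\to 0$ as $t\to1-$; applying this to each entry shows that $F$ is homotopic to the constant matrix $F(0)\in\sln(\Cbb)$.

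Second, exactly as in the earlier proofs, $\sln(\Cbb)$ is path-connected, so $F(0)$ is homotopic to the unity matrix; concatenating the two homotopies and invoking Theorem~\ref{t_homot} yields $F\in\EE_n(A(B^m))$, and identically for $A(\Dbb^m)$.

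Third, for the infinite polydisk algebra $A(\Dbb^\infty)$ I would set $F_t(z)=F(tz)$ with $tz=(tz_1,tz_2,\dots)$. The only step needing care is the uniform convergence $\|f_t-f\|\to 0$, since $\overline{\Dbb}^\infty$ is compact by Tychonoff's theorem but the naive uniform-continuity argument is less transparent in infinitely many variables. Here I would combine two facts: dilation is a contraction for the sup-norm, because $tz\in\overline{\Dbb}^\infty$ whenever $z\in\overline{\Dbb}^\infty$, so that $\|g_t\|\le\|g\|$; and polynomials in finitely many coordinates are dense in $A(\Dbb^\infty)$. Given $\er>0$, I choose such a polynomial $p$ with $\|f-p\|<\er$; then $\|f_t-f\|\le\|(f-p)_t\|+\|p_t-p\|+\|p-f\|\le 2\er+\|p_t-p\|$, and $\|p_t-p\|\to 0$ trivially since $p_t$ multiplies each monomial by a power of $t$. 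The remainder of the argument is then identical to the finite-dimensional case.

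The main obstacle is precisely this infinite-dimensional convergence step: one cannot simply cite uniform continuity on a finite-dimensional compact domain, and must instead lean on the contraction property of dilation together with density of finitely-supported polynomials. Everything else is a routine repetition of the homotopy-plus-path-connectedness scheme already carried out for $A(\Dbb)$.
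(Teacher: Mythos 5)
Your proof is correct and follows essentially the same route as the paper, whose proof consists of the single remark that one repeats the dilation-homotopy argument of Corollary~\ref{c_AD} and Proposition~\ref{p_A_Rbb}. Your only addition is to spell out the convergence $\|f_t-f\|\to 0$ for $A(\Dbb^\infty)$ via the contraction property of dilation and density of polynomials in finitely many coordinates, a detail the paper leaves implicit but which is exactly the right way to fill it in.
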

\begin{proof}
It suffices to repeat the argument used in the proof of Corollary~\ref{c_AD} or Proposition~\ref{p_A_Rbb}.
\end{proof}

\subsubsection{Algebra $H^\infty_{\Rbb}(\Dbb)$}
It is proved in \cite{MW09} that $\bsr(H^\infty_{\Rbb}(\Dbb))=2$. We have not been able to determine the connected component of the identity in $\sln(H^\infty_{\Rbb}(\Dbb))$.

\begin{prb} Is any element in $\sln(H^\infty_{\Rbb}(\Dbb))$ null-homotopic?

\end{prb}

\section{Invertible matrices as products of exponentials}\label{s_exp}
Let $R$ be a commutative unital ring.
In the present section, we address the following problem:
whether a matrix $F\in \gln(R)$ is representable as a product of exponentials,
that is, $F = \exp G_1\dots \exp G_k$ with $G_j \in M_n(R)$.
For $n=2$ and matrices with entries in a Banach algebra,
this problem was recently studied in \cite{MR18}.

\subsection{Basic results}
There is a direct relation between the problem under consideration and factorization
of matrices in $\gln(R)$.

\begin{lemma}
Let $X\in \sln(R)$ be a unipotent upper or lower triangular matrix.
Then $X$ is an exponential.
\end{lemma}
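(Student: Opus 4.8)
The plan is to realize $X$ as the exponential of its matrix logarithm, exploiting the nilpotency that is built into a unipotent triangular matrix. Write $X = I + N$, where $I$ is the identity and $N$ is the off-diagonal part of $X$ (strictly above the diagonal in the upper-triangular case, strictly below it in the lower-triangular case). Since $X$ is unipotent triangular, $N$ is strictly triangular, hence nilpotent with $N^n = 0$. This truncation of all the relevant power series is the feature I would lean on throughout.

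First I would define the candidate logarithm
\[
G := \log(I+N) = \sum_{k=1}^{n-1} \frac{(-1)^{k-1}}{k} N^k.
\]
Because $N^n = 0$ this is a finite sum, so no convergence question arises and $G$ is a genuine element of $M_n(R)$. Each power $N^k$ is strictly triangular of the same type as $N$, so $G$ is itself strictly triangular; in particular $\operatorname{tr} G = 0$, which is consistent with $\det X = 1$. This step uses that the integers $1, \dots, n-1$ are invertible in $R$, which in the Banach-algebra setting of this section is automatic since $R$ is then a $\mathbb{C}$-algebra, so $\mathbb{Q} \subseteq R$.

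Next I would verify $\exp(G) = X$ by reducing everything to a single scalar identity. Consider the commutative subring $R[N] \subseteq M_n(R)$ generated by $N$. Since $N^n = 0$, there is a $\mathbb{Q}$-algebra homomorphism $\mathbb{Q}[t]/(t^n) \to M_n(R)$ sending $t \mapsto N$, whose image is commutative. The universal relation $\exp(\log(1+t)) = 1+t$ holds in $\mathbb{Q}[[t]]$; reducing it modulo $t^n$ gives a polynomial identity in $\mathbb{Q}[t]/(t^n)$, which I then evaluate at $t = N$. Under this evaluation the truncated $\log$ series maps to $G$, the truncated $\exp$ series maps to $\exp(G) = \sum_{m=0}^{n-1} G^m/m!$, and the right-hand side maps to $I + N = X$. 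Hence $X = \exp(G)$ with $G \in M_n(R)$, as required.

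The step I expect to require the most care is this last one: justifying that the formal identity $\exp(\log(1+t)) = 1+t$ may legitimately be specialized at the nilpotent $N$. The point to get right is that both $\exp$ and $\log$ are applied only to nilpotent (respectively unipotent) arguments, so every series is in fact a polynomial and one is manipulating elements of $\mathbb{Q}[t]/(t^n)$; the commutativity of the one-generator subring $R[N]$ is precisely what licenses using the scalar power-series identity verbatim. No analytic estimate is involved — the work is bookkeeping to confirm that the formal identity descends correctly to the truncated, nilpotent setting.
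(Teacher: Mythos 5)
Your proof is correct, and it takes a genuinely different route from the paper's. The paper never writes the logarithm in closed form: for $n=2$ it exhibits the exponential directly, and for $n\ge 3$ it solves the equation $\exp(\text{strictly triangular})=X$ entrywise, determining the unknown entries $a_i, b_i, c_i,\dots$ recursively along successive superdiagonals, each new entry being the corresponding entry of $X$ minus a universal polynomial $f$ in the previously found entries. You instead set $N=X-I$, define $G=\log(I+N)=\sum_{k=1}^{n-1}\frac{(-1)^{k-1}}{k}N^k$ as a finite sum, and verify $\exp G=X$ by specializing the formal identity $\exp(\log(1+t))=1+t$, reduced modulo $t^n$, under the evaluation homomorphism $\mathbb{Q}[t]/(t^n)\to M_n(R)$ sending $t\mapsto N$. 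Your approach buys an explicit closed-form logarithm, a uniform treatment of all $n$ and of both triangular types in one stroke, and a cleaner justification than the paper's ``and the procedure continues,'' which leaves the bookkeeping of the polynomials $f$ implicit; it also makes explicit a hypothesis the paper leaves tacit, namely that the integers $2,\dots,n-1$ (indeed all the series coefficients) must be invertible in $R$ --- harmless here, since the exponential only makes sense when $R$ is a $\mathbb{Q}$-algebra, as it is for the Banach algebras and rings of holomorphic functions of this section, but worth flagging given that the lemma is stated for a general ring. The one delicate step you identify is indeed the right one, and your justification is sound: since $\log(1+t)$ and its truncation agree modulo $t^n$ and both have zero constant term, and since $u\equiv v \pmod{t^n}$ with $u(0)=v(0)=0$ implies $\exp u\equiv \exp v\pmod{t^n}$, the identity $\sum_{m=0}^{n-1}G^m/m!=I+N$ follows from the power-series identity in $\mathbb{Q}[[t]]$, with commutativity of the subalgebra generated by $N$ licensing the scalar computation. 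The paper's recursion is more elementary in that it avoids formal power-series calculus, but your argument is the more standard and more transparent one.
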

\begin{proof}
For $n=2$,
we have
\[
\exp\begin{pmatrix}
  0      &a \\
  0      &0
\end{pmatrix} =
\begin{pmatrix}
  1      &a \\
  0      &1
\end{pmatrix}.
\]

Let $n\ge 3$. Given $\alpha_1, \alpha_2, \dots$; $\beta_1, \beta_2, \dots$; $\gamma_1, \gamma_2, \dots$,
we will find $a_1, a_2, \dots$; $b_1, b_2, \dots$; $c_1, c_2, \dots$ such that
\[
\begin{pmatrix}
   1       &\alpha_1   &\alpha_2 &\alpha_3 &\dots \\
           &1          &\beta_1  &\beta_2 &\ddots\\
           &           &1        &\gamma_1 &\ddots \\
           &\mathbf{0} &  &1 &\ddots\\
           &     &     &          &\ddots
\end{pmatrix}
=
\exp
\begin{pmatrix}
   0       &a_1   &a_2 &a_3 &\dots \\
           &0          &b_1  &b_2 &\ddots\\
           &           &0        &c_1 &\ddots \\
           &\mathbf{0} &  &0 &\ddots\\
           &     &     &          &\ddots
\end{pmatrix}.
\]
Put $a_1=\alpha_1$, $b_1=\beta_1$, \dots.
Next, we have $a_2 = \alpha_2 - f(a_1, b_1)
= \alpha_2 - f(\alpha_1, \beta_1)$.
Analogously, we find $b_2$, $c_2$, \dots.
To find $a_3$, observe that $a_3 = \alpha_3 - f(a_1, a_2, b_1, c_2)$.
Since $f$ depends on $a_i$, $b_i$, $c_i$ with $i<3$,
we obtain $a_3 = \alpha_3 - \widetilde{f}(\alpha_1, \alpha_2, \beta_1, \beta_2, \gamma_1, \gamma_2)$,
and the procedure continues.
So, the equation under consideration is solvable for any
$\alpha_1, \alpha_2, \dots$; $\beta_1, \beta_2, \dots$.
\end{proof}

\begin{corollary}\label{c_exp1}
Assume that $\sln(R) = \EE_n(R)$
and every element in $\EE_n(R)$ is a product of $N(R)$
unipotent upper or lower triangular matrices. Then every element in $\sln(R)$
is a product of $N(R)$ exponentials.
\end{corollary}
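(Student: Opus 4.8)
The plan is to reduce the statement directly to the preceding Lemma, so that no new computation is required beyond chaining its conclusion along a single fixed factorization.

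First I would fix an arbitrary $X\in\sln(R)$. By the first hypothesis $\sln(R)=\EE_n(R)$, so $X$ lies in $\EE_n(R)$. By the second hypothesis, $X$ then admits a factorization $X=T_1 T_2\cdots T_{N(R)}$ in which each $T_i$ is a unipotent upper or lower triangular matrix with entries in $R$. This is exactly the shape of matrix to which the Lemma applies.

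Next I would apply the Lemma factorwise. Since each $T_i$ is unipotent and either upper or lower triangular, the Lemma furnishes a matrix $G_i\in M_n(R)$ (concretely, a nilpotent triangular matrix obtained by solving the triangular system in the Lemma's proof, or its transpose in the lower-triangular case) with $T_i=\exp G_i$. Substituting these expressions back yields $X=\exp G_1\,\exp G_2\cdots\exp G_{N(R)}$, a product of exactly $N(R)$ exponentials, which is the desired conclusion.

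The only point that needs care is the bookkeeping of the constant $N(R)$: the Lemma must be invoked uniformly for each of the $N(R)$ triangular factors, covering both the upper- and lower-triangular orientations, so that the number of exponentials produced matches the number of triangular factors precisely. There is no substantive obstacle here — all the real content is already carried by the Lemma, and the corollary merely transports a product of triangular unipotents into a product of the same length of exponentials.
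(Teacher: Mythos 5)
Your proof is correct and is exactly the intended argument: the paper states this corollary without proof precisely because it follows immediately from the Lemma by factoring $X$ into $N(R)$ unipotent triangular matrices and replacing each factor by an exponential. Your remark about handling the lower-triangular case (by transposing the Lemma's upper-triangular construction) is the only detail the paper leaves implicit, and you handle it correctly.
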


\begin{corollary}\label{c_exp2}
Let the assumptions of Corollary~\ref{c_exp1} hold.
Suppose in addition that every invertible element in $R$
admits a logarithm.
Then every $X\in \gln(R)$ is a product of $N(R)$ exponentials.
\end{corollary}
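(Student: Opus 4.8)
The plan is to reduce the problem in $\gln(R)$ to the already-settled case of $\sln(R)$ by splitting off a diagonal matrix that carries the determinant. Given $X\in\gln(R)$, set $d=\det X\in R^{*}$. By the additional hypothesis $d$ admits a logarithm, say $d=\exp a$ with $a\in R$. The first thing I would record is that the diagonal matrix $\Delta=\mathrm{diag}(d,1,\dots,1)$ is itself a single exponential: since exponentiation of a diagonal matrix acts entrywise, $\Delta=\exp(\mathrm{diag}(a,0,\dots,0))$. This is the only place the logarithm hypothesis enters, and it is completely clean precisely because $\Delta$ is diagonal, so no off-diagonal interaction arises.

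Next I would pass to $Y=\Delta^{-1}X$. Since $\Delta^{-1}=\mathrm{diag}(d^{-1},1,\dots,1)$ and $\det Y=d^{-1}\det X=1$, we have $Y\in\sln(R)$. The hypotheses of Corollary~\ref{c_exp1} are in force, so $Y$ is a product of $N(R)$ exponentials, $Y=\exp G_1\cdots\exp G_{N(R)}$, where each $G_j$ is a strictly triangular exponent supplied by the preceding Lemma. Multiplying back by $\Delta$ yields $X=\Delta\,\exp G_1\cdots\exp G_{N(R)}$, which already exhibits $X$ as a product of exponentials.

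The step I expect to be the main obstacle is the bookkeeping of the exact number of factors. The construction just described produces $N(R)+1$ exponentials, and to reach the asserted bound $N(R)$ one must absorb $\Delta$ into the triangular factorization of $Y$ without spending an extra exponential. The natural attempt is to merge $\Delta$ with the first factor and argue that the upper-triangular matrix $\Delta\,\exp G_1$, whose diagonal is $(d,1,\dots,1)$, is a single exponential. This is the delicate point, because a triangular matrix with a non-unit diagonal entry is \emph{not} automatically an exponential: solving $\exp M=\Delta\,\exp G_1$ for an upper-triangular $M$ forces the first-row entries to be divisible by the coefficient $\frac{e^{a}-1}{a}=\frac{d-1}{a}$, which need not be invertible in $R$. (In the unipotent case treated by the Lemma the corresponding coefficient degenerates to $1$, which is exactly why that recursion closes.) Working only from the abstract hypotheses of Corollary~\ref{c_exp1}, the factorization of $Y$ is a black box, so concatenation with $\Delta$ is the only available move and $N(R)+1$ is what this method yields directly; obtaining the sharp count $N(R)$ would require either structural control of the individual factors $G_j$ allowing a more careful distribution of $\Delta$ across several of them, or a refinement of the reduction in which the determinant is never separated off as an autonomous factor.
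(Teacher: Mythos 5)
Your reduction is on the right track but stalls exactly where you say it does, and the paper closes that gap with a trick you missed: do not concentrate the determinant in a single diagonal entry. Because $d=\det X$ admits a logarithm $h\in R$, you can extract an $n$-th root $f=\exp(h/n)\in R^{*}$, so that $d=f^{n}$, and pull out the \emph{scalar} matrix $fI=\mathrm{diag}(f,\dots,f)$ instead of your $\Delta=\mathrm{diag}(d,1,\dots,1)$. Then $(fI)^{-1}X\in\sln(R)$, Corollary~\ref{c_exp1} gives $(fI)^{-1}X=\exp Y_{1}\cdots\exp Y_{N}$, and now the merge that fails for your $\Delta$ succeeds trivially: $(\ln f)\,I$ is central, hence commutes with $Y_{1}$, so
\begin{equation*}
fI\cdot\exp Y_{1}=\exp\bigl((\ln f)\,I\bigr)\exp Y_{1}
=\exp\bigl((\ln f)\,I+Y_{1}\bigr),
\end{equation*}
and $X=\exp\bigl((\ln f)\,I+Y_{1}\bigr)\exp Y_{2}\cdots\exp Y_{N}$ is a product of exactly $N(R)$ exponentials. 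Note that the logarithm hypothesis is doing double duty here: it is what makes the $n$-th root $f$ exist in the first place (an invertible element of a general ring need not have one), not merely what makes the diagonal correction an exponential.

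Your diagnosis of why $\Delta\exp G_{1}$ is not a single exponential --- the coefficient $(d-1)/a$ obstructing the triangular solve --- is sound, and your closing sentence even names the right escape route (``a more careful distribution of $\Delta$ across several of them''); spreading the determinant uniformly over the diagonal is precisely that distribution, and it makes the correction commute with \emph{everything}, so no structural knowledge of the black-box factors $G_{j}$ is needed. As submitted, though, your argument only proves the bound $N(R)+1$, so it does not establish the stated count.
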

\begin{proof}
Let $X\in \gln(R)$.
So, $\det X \in R^*$ and $\ln\det X$ is defined.
Therefore, $\det X = f^n$ for appropriate $f\in R^*$ and
\[
\begin{pmatrix}
  f^{-1}     &       &\mathbf{0} \\
             &\ddots &            \\
  \mathbf{0} &       &f^{-1}
\end{pmatrix}
X \in \sln(R).
\]
Applying Corollary~\ref{c_exp1}, we obtain
\begin{align*}
  X
&=
  \begin{pmatrix}
  f     &       &\mathbf{0} \\
             &\ddots &            \\
  \mathbf{0} &       &f
  \end{pmatrix} \exp Y_1\dots \exp Y_N\\
&= \exp\left[ \begin{pmatrix}
  \ln f     &       &\mathbf{0} \\
             &\ddots &            \\
  \mathbf{0} &       &\ln f
  \end{pmatrix}
   +Y_1 \right]\exp Y_2\dots \exp Y_N,
\end{align*}
as required.
\end{proof}

\subsection{Rings of holomorphic functions on Stein spaces}

\begin{corollary}\label{c_stein}
Let $\Omega$ be a Stein space of dimension $k$ and let $X\in \gln(\OO(\Omega))$.
Then there exists a number $E(k, n)$ such that the following properties are equivalent:
\begin{itemize}
  \item[(i)] $X$ is null-homotopic;
  \item[(ii)] $X$ is a product of $E(k, n)$ exponentials.
\end{itemize}
\end{corollary}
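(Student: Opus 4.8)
The plan is to establish the equivalence of (i) and (ii) via the factorization machinery already developed, using the null-homotopy characterization from Theorem~\ref{t_homot} together with the exponential-representation results of Corollaries~\ref{c_exp1} and \ref{c_exp2}. The direction (ii)$\Rightarrow$(i) is straightforward and should be disposed of first: each factor $\exp G_j$ is connected to the identity by the explicit path $t\mapsto \exp(tG_j)$, $t\in[0,1]$, lying in $\gln(\OO(\Omega))$; concatenating these homotopies shows that a product of exponentials is null-homotopic. This argument requires no dimension restriction and no special structure beyond the group operation, so the real content lies entirely in the reverse implication.

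For (i)$\Rightarrow$(ii), I would invoke the theory of holomorphic functions on Stein spaces. The key input is that for a Stein space $\Omega$ of dimension $k$, null-homotopy of a matrix in $\sln(\OO(\Omega))$ implies membership in $\EE_n(\OO(\Omega))$ with a \emph{uniform bound} on the number of elementary (equivalently, unipotent triangular) factors depending only on $k$ and $n$. This is precisely the kind of statement where one expects to cite a null-homotopy-to-factorization result in the spirit of the Gromov–Vaserstein theorem or its refinements by Ivarsson–Kutzschebauch, which guarantee that a null-homotopic matrix factors as a product of a bounded number of unipotent triangular matrices over $\OO(\Omega)$, the bound being a function $N=N(k,n)$ of the Stein dimension and the matrix size. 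Granting such a bound, Corollary~\ref{c_exp1} converts the $N$ unipotent triangular factors into $N$ exponentials. Since a null-homotopic matrix in $\gln$ automatically has determinant in the connected component of $1$ in $\OO(\Omega)^*$, hence admits a logarithm (every invertible holomorphic function homotopic to a constant on a Stein space has a holomorphic logarithm), the hypothesis of Corollary~\ref{c_exp2} is met, and one may absorb the diagonal scaling into the exponential factors exactly as in that proof. Setting $E(k,n)=N(k,n)$ then yields the desired product of $E(k,n)$ exponentials.

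The main obstacle is securing the uniform bound $N(k,n)$ on the number of factors in the factorization step. The elementary argument in Theorem~\ref{t_sbr1_2} gives a clean bound of $2n$ factors but relies on $\bsr(R)=1$, which generally fails for $\OO(\Omega)$ when $\dim\Omega\ge 2$; thus one cannot simply apply that theorem and must instead appeal to the deeper Oka-theoretic factorization results, where extracting an \emph{effective} bound depending only on $(k,n)$ — rather than on the specific matrix or on $\Omega$ itself — is the delicate point. I would therefore structure the proof to cite the relevant uniform-factorization theorem as a black box, verify that its bound is a function of $k$ and $n$ alone, and then mechanically chain it with Corollaries~\ref{c_exp1} and \ref{c_exp2}; the only subtlety remaining after that is confirming that null-homotopy of $X\in\gln(\OO(\Omega))$ indeed forces $\det X$ to lie in the identity component and hence to have a logarithm, which follows by applying the determinant to the given homotopy.
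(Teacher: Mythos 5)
Your proposal is correct and follows essentially the same route as the paper: the paper's proof likewise cites the Ivarsson--Kutzschebauch uniform factorization theorem \cite[Theorem~2.3]{IK12} to write any null-homotopic matrix in $\sln(\OO(\Omega))$ as a product of $N(k,n)$ unipotent triangular matrices, converts these to exponentials by arguing as in the proof of Corollary~\ref{c_exp2} (so $E(k,n)\le N(k,n)$), and treats (ii)$\Rightarrow$(i) as straightforward. Your explicit verification that $\det X$ acquires a logarithm from the homotopy is just a careful filling-in of the paper's appeal to the proof of Corollary~\ref{c_exp2}.
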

\begin{proof}
By \cite[Theorem~2.3]{IK12}, any null-homotopic $F\in \sln(\OO(\Omega))$
is a product of $N(k, n)$ unipotent upper or lower triangular
matrices.
So, arguing as in the proof of Corollary~\ref{c_exp2},
we conclude that (i) implies (ii) with $E(k,n) \le N(k,n)$
The reverse implication is straightforward.
\end{proof}
The numbers $N(k, n)$ are not known in general.
If the dimension $k$ of the Stein space is fixed, then
the dependence of $N(k,n)$ on the size $n$ of the matrix is easier to handle.
Certain $K$-theory arguments guarantee that the number of unipotent matrices needed for factorizing an element in $\sln(\OO(\Omega))$ is a non-increasing function of $n$ (see \cite{DV88}). So, as done in \cite{Brud},
combining the above property and results from \cite{IK12PAMS}, we obtain the following estimates:
\begin{align*}
 E(1,n)\le N(1,n) & =  4 \textrm{\ for all\ } n, \\
 E(2,n)\le  N(2,n) &\le  5 \textrm{\ for all\ } n, \textrm{\ and\ }\\
\textrm{for each\ } k, \textrm{\ there exists\ } n(k) \textrm{\ such that\ }
   E(k,n)\le N(k,n) &\le 6 \textrm{\ for all\ } n \ge n(k).
\end{align*}
In Section~\ref{ss_3exp}, we in fact improve on that: we show $E(1,2) \le 3$.
In general, it seems that the number of exponentials $E(k,n)$ to factorize an element in $\gln(\OO(\Omega))$
is less than the number $N(k,n)$ needed to write
an element in  $\sln(\OO(\Omega))$ as a product of unipotent upper or lower triangular
matrices.

Also, remark that (ii) implies (i) in Corollary~\ref{c_stein}
for any algebra $R$ in the place of the ring of holomorphic functions.
Assume that the algebra  $R$ has a topology. Then a topology on $\gln(R)$ is naturally induced and
the implication (i)$\Rightarrow$(ii) means that any product of exponentials is contained in the connected component
of the identity (also known as the principal component) of $\gln(R)$.
The reverse implication is a difficult question, even without a uniform bound on the number of exponentials.

\subsection{Rings $R$ with $\bsr(R)=1$}
Combining Theorem~\ref{t_sbr1_2} and Corollary~\ref{c_exp2},
we recover a more general version of Theorem~7.1(3) from \cite{MR18},
where $R$ is assumed to be a Banach algebra.
Moreover, we obtain similar results for larger size matrices.

\begin{corollary}\label{c_glz_4}
Let $R$ be a commutative unital ring, $\bsr R =1$,
and let every $x\in R^*$ admit a logarithm.
Then every element in $\glz(R)$ is a product of $4$ exponentials.
\end{corollary}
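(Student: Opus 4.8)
The plan is to combine the two machinery pieces already in place, namely Theorem~\ref{t_sbr1_2} and Corollary~\ref{c_exp2}, and simply track the number of factors produced. First I would invoke Theorem~\ref{t_sbr1_2}: since $\bsr(R)=1$, we have $\EE_2(R)=\slz(R)$. The crucial point is not merely that the factorization exists, but that the proof of Theorem~\ref{t_sbr1_2} in the case $n=2$ explicitly produces every $X\in\slz(R)$ as a product of exactly \emph{four} unipotent triangular matrices (the three displayed left- and right-multipliers together with $X$ itself, rearranged). Thus $N(R)=4$ for $n=2$ under the hypothesis $\bsr(R)=1$.

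Next I would apply Corollary~\ref{c_exp1}, whose hypotheses are exactly that $\slz(R)=\EE_2(R)$ and that every element is a product of $N(R)=4$ unipotent upper or lower triangular matrices. By the Lemma, each such unipotent triangular matrix is a single exponential, so every $X\in\slz(R)$ is a product of $4$ exponentials. This handles the determinant-one case with the stated bound.

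To pass from $\slz(R)$ to $\glz(R)$ I would use the additional hypothesis that every $x\in R^*$ admits a logarithm, which is precisely the extra assumption feeding Corollary~\ref{c_exp2}. The argument there scales $X\in\glz(R)$ by the diagonal matrix $\operatorname{diag}(f^{-1},f^{-1})$, where $f\in R^*$ satisfies $f^2=\det X$ (obtained by setting $f=\exp(\tfrac12\ln\det X)$), producing an element of $\slz(R)$; the diagonal correction factor is then absorbed into the first exponential by adding $\operatorname{diag}(\ln f,\ln f)$ to the exponent $Y_1$. The key observation is that this absorption does \emph{not} increase the count of exponentials, so the total remains $N(R)=4$.

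The only real point requiring care, and the step I would flag as the main obstacle, is verifying that the $n=2$ branch of the proof of Theorem~\ref{t_sbr1_2} genuinely yields $N(R)=4$ rather than some larger number, so that the hypotheses of Corollary~\ref{c_exp1} are met with $N(R)=4$; once that bookkeeping is confirmed, the rest is a direct specialization of Corollaries~\ref{c_exp1} and~\ref{c_exp2} to the case $n=2$, with the logarithm hypothesis ensuring the square root $f$ of $\det X$ exists inside $R^*$.
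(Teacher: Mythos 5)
Your proposal is correct and is essentially the paper's own argument: the paper proves Corollary~\ref{c_glz_4} by exactly this combination of Theorem~\ref{t_sbr1_2} (whose $n=2$ case explicitly ends with ``the $X$ is representable as a product of four multipliers,'' confirming $N(R)=4$) and Corollary~\ref{c_exp2}, where the central scalar matrix $\operatorname{diag}(\ln f,\ln f)$ is absorbed into the first exponent without increasing the count. The bookkeeping point you flag is settled by that sentence in the proof of Theorem~\ref{t_sbr1_2}, so there is no gap.
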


\begin{corollary}
Let $R$ be a commutative unital ring, $\bsr R =1$,
and let every $x\in R^*$ admit a logarithm.
Then every element in $\gln(R)$, $n\ge 3$, is a product of $6$ exponentials.
\end{corollary}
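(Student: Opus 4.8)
The plan is to run the determinant reduction exactly as in Corollary~\ref{c_exp2}, so that the problem collapses to counting the unipotent triangular factors needed to write an element of $\sln(R)$ when $\bsr(R)=1$ and $n\ge 3$, and then to argue that this count can be taken to be $6$ \emph{uniformly} in $n$. First I would dispose of the determinant as in Corollary~\ref{c_exp2}: given $X\in\gln(R)$, set $f=\exp(\tfrac1n\ln\det X)\in R^*$, so that $f^n=\det X$ and $\mathrm{diag}(f^{-1},\dots,f^{-1})X\in\sln(R)$. Since the scalar matrix $\mathrm{diag}(\ln f,\dots,\ln f)$ is central, it commutes with the first exponential produced by the $\sln$-factorization and is absorbed for free; hence the number of exponentials needed for $X\in\gln(R)$ equals the number of unipotent triangular factors needed for an element of $\sln(R)$.

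The core step is therefore to bound that triangular count by $6$ for every $n\ge 3$. For $n=3$ this is immediate from Theorem~\ref{t_sbr1_2}, whose proof exhibits each element of $\sln(R)$ as a product of $2n$ unipotent upper and lower triangular matrices, which is exactly $6$ when $n=3$. To propagate the \emph{same} bound to all larger $n$ I would invoke the fact --- already used in the paper for $\OO(\Om)$ and available algebraically through \cite{DV88} --- that, once $\sln(R)=\EE_n(R)$, the minimal number $N(n)$ of unipotent triangular factors needed to decompose an element of $\sln(R)$ is a non-increasing function of $n$ on the stable range. For $\bsr(R)=1$ that stable range begins at $n\ge \bsr(R)+2=3$, which is precisely the hypothesis of the statement, and $\bsr(R)=1$ guarantees $\sln(R)=\EE_n(R)$ for all $n$ by Theorem~\ref{t_sbr1_2}. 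The anchor $N(3)\le 6$ together with monotonicity then yields $N(n)\le N(3)\le 6$ for every $n\ge 3$.

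Finally I would convert triangular factors into exponentials. By the Lemma preceding Corollary~\ref{c_exp1}, every unipotent upper or lower triangular matrix in $\sln(R)$ is a single exponential, so the bound of $6$ triangular factors for $\sln(R)$ becomes a bound of $6$ exponentials; combined with the determinant reduction of the first paragraph, which does not increase the count, every $X\in\gln(R)$ with $n\ge 3$ is a product of $6$ exponentials.

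The main obstacle is the uniform-in-$n$ step: Theorem~\ref{t_sbr1_2} alone gives only $2n$ factors, which exceeds $6$ as soon as $n\ge 4$, so the whole strength of the statement rests on the stabilization/non-increasing property of the triangular width. The delicate points there are to verify that the $K$-theoretic result of \cite{DV88} applies verbatim to an abstract commutative ring with $\bsr(R)=1$, rather than only to $\OO(\Om)$, and that its stable range genuinely starts at $n=3$, so that the anchor value $N(3)\le 6$ is the correct one to propagate. Should that route be unavailable in the abstract setting, the fallback is a direct bounded Gaussian elimination for $n\ge 3$: use $\bsr(R)=1$ (via Remark~\ref{r_bsr1}) to make a single column entry a unit, clear that column with one lower- and one upper-triangular sweep, and exploit the extra dimension $n\ge 3$ to clear the remaining columns in a fixed number of passes rather than recursing one dimension at a time.
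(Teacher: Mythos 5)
Your overall architecture matches the paper's: the determinant reduction with $f=\exp(\frac1n \ln\det X)$ and the absorption of the central matrix $\mathrm{diag}(\ln f,\dots,\ln f)$ into the first exponential is exactly the argument of Corollary~\ref{c_exp2}; the triangular-to-exponential conversion is the Lemma preceding Corollary~\ref{c_exp1}; and the case $n=3$ is settled, as in the paper, by the $2n$-factor bound of Theorem~\ref{t_sbr1_2}. The divergence --- and the genuine gap --- is the step $n\ge 4$. You propagate the anchor $N(3)\le 6$ by asserting that the minimal number of unipotent triangular factors is non-increasing in $n$ for an arbitrary commutative ring with $\bsr(R)=1$, with the monotone range starting at $n=3$. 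But the paper records that monotonicity only for the rings $\OO(\Om)$ with $\Om$ Stein, where it rests on $K$-theoretic arguments in that specific setting (see the discussion around Corollary~\ref{c_stein}, citing \cite{DV88} and \cite{Brud}); neither the paper nor your proposal establishes such a statement for abstract rings, and you supply no precise reference or argument --- you only flag it as ``delicate''. As written, your main route is therefore unproved exactly at the step which, as you yourself note, carries the whole content of the statement, since Theorem~\ref{t_sbr1_2} alone gives only $2n$ factors.

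What the paper actually does for $n\ge 4$ is different and avoids monotonicity altogether: it invokes Theorem~20(b) of \cite{DV88}, which yields a product of $6$ factors for every
$n \ge \min\left( m \left[ \frac{\ut_m(R)+1}{2} \right]\right)$,
the minimum taken over $m\ge \bsr(R)+1=2$. The anchor is thus not the $n=3$ case but the $n=2$ case of Theorem~\ref{t_sbr1_2}, which gives $\ut_2(R)=4$ and hence the threshold $2\cdot\left[\frac{5}{2}\right]=4$; so $n\ge4$ is covered directly, with $n=3$ handled separately as you did. The correct repair of your proof is to replace the monotonicity claim by this citation. Your fallback --- a ``bounded Gaussian elimination'' clearing the remaining columns in a fixed number of passes --- does not close the gap either: the elimination in Theorem~\ref{t_sbr1_2} inherently recurses one dimension at a time and produces $2n$ factors, and obtaining a bound independent of $n$ is precisely the nontrivial stable-range bookkeeping that \cite{DV88} provides; nothing in your sketch indicates how the extra dimensions would cap the number of sweeps at $6$.
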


\begin{proof}
For $n=3$, it suffices to combine Theorem~\ref{t_sbr1_2} and Corollary~\ref{c_exp2}.

Now, assume that $n\ge 4.$
Let $\ut_m$ denote the number of unipotent matrices needed to factorize
any element in $\sla_m(R)$ starting with an upper triangular matrix.
Theorem~20(b) in \cite{DV88} says that any element in $\sln(R)$ is a product of $6$ exponentials for
\[
n \ge \min \left(m \left[ \frac{\ut_m (R) +1} {2} \right]\right),
\]
 where the minimum is taken over all
$m \ge \bsr R +1$. In our case the minimum is taken over $m\ge 2$ and the number $\ut_2 (R) =4 $ by the proof of
Theorem \ref{t_sbr1_2}.
Since $n\ge 4$, the proof is finished.
\end{proof}

Corollary~\ref{c_glz_4} applies to the disk algebra and also to
the rings $\OO(\Cbb)$ and $\OO(\Dbb)$ of holomorphic functions.
Indeed, the identity $\bsr(\OO(\Omega))=1$ for an open Riemann
surface follows from the strengthening of the classical Wedderburn lemma
(see \cite[Chapter 6, Section 3]{R}; see also \cite{IK12} or \cite{Br18}).
However, for $R=\OO(\Cbb)$ and $R=\OO(\Dbb)$,
the number $4$ is not optimal; see Section~\ref{ss_3exp} below.
Also, it is known that the optimal number is at least $2$
(see \cite{MR18}). So, we arrive at the following natural question:

\begin{prb}
Is any element of $\glz(\OO(\Dbb))$ or $\glz(\OO(\Cbb))$
a product of two exponentials?
\end{prb}

\subsection{Products of $3$ exponentials}\label{ss_3exp}
In this section, we prove the following result.

\begin{proposition}\label{p_riemann}
Let $\Omega$ be an open Riemann surface.
Then every element in $\slz(\OO(\Omega))$
is a product of $3$ exponentials.
\end{proposition}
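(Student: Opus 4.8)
The plan is to reduce the problem to the $2\times 2$ case and exploit the fact that on an open Riemann surface $\Omega$ the ring $R=\OO(\Omega)$ enjoys both $\bsr(R)=1$ and the property that every nowhere-vanishing holomorphic function admits a holomorphic logarithm. The latter holds because an open Riemann surface is a (noncompact, hence) Stein space of dimension $1$ with $H^2(\Omega,\Zbb)=0$, so every unit $f\in\OO(\Omega)^*$ (a zero-free function) can be written $f=\exp g$ for some $g\in\OO(\Omega)$. Thus I already have, from Corollary~\ref{c_glz_4}, that every element of $\slz(\OO(\Omega))$ is a product of four exponentials. The entire content of the proposition is to shave this down from four to three, so the work is to economize in the explicit factorization coming from the proof of Theorem~\ref{t_sbr1_2}.

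First I would recall precisely what that proof produces in the $2\times 2$ case. Given $X\in\slz(R)$ it exhibits
\[
\begin{pmatrix}1&0\\-\alpha&1\end{pmatrix}
\begin{pmatrix}1&(1-x_{11})\alpha^{-1}\\0&1\end{pmatrix}
\begin{pmatrix}1&0\\y&1\end{pmatrix}X
=
\begin{pmatrix}1&*\\0&1\end{pmatrix},
\]
so $X$ is the product of four unipotent (upper/lower) triangular matrices, each of which is an exponential by the Lemma. The first idea is that two adjacent \emph{same-type} triangular factors multiply to a single triangular factor (the lower-triangular unipotent matrices form a group, likewise the upper-triangular ones), and a single unipotent triangular matrix is one exponential. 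So any alternating word $L_1 U_1 L_2 U_2$ of lower/upper factors is genuinely four exponentials unless two neighbours can be merged. The key step is therefore to rearrange the factorization so that two factors of the same type become adjacent, or to absorb one factor into another using the extra freedom available over a Riemann surface.

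The main obstacle, and where I would concentrate effort, is that a naive merge is blocked by the alternating pattern, so I expect to need the logarithm hypothesis in an essential way. The natural device is to allow \emph{diagonal} factors: a diagonal matrix $\mathrm{diag}(f,f^{-1})\in\slz(R)$ with $f\in R^*$ is an exponential, since $f=\exp g$ gives $\mathrm{diag}(f,f^{-1})=\exp\,\mathrm{diag}(g,-g)$. One then uses an $LU$- or $LDU$-type decomposition: solving $X=L\,D\,U$ with $L$ lower unipotent, $D$ diagonal invertible, $U$ upper unipotent would present $X$ as exactly three exponentials, provided the pivot $x_{11}$ (or a suitable conjugate/modified entry) is a unit so that Gaussian elimination goes through in one sweep. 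Since $x_{11}$ need not be a unit, I would first apply the $\bsr(R)=1$ reduction once to replace $X$ by a matrix whose $(1,1)$-entry is invertible, or equivalently choose the single lower-triangular elementary factor that makes the top-left pivot a unit, and then complete the elimination with one diagonal factor and one upper factor. The delicate point is to verify that this single lower-triangular step, together with the diagonal and upper steps, actually reconstitutes an arbitrary $X$ and uses only three exponential factors rather than four; I would check this by direct computation, writing $X=\bigl(\begin{smallmatrix}1&0\\ \beta&1\end{smallmatrix}\bigr)\bigl(\begin{smallmatrix}d&0\\0&d^{-1}\end{smallmatrix}\bigr)\bigl(\begin{smallmatrix}1&\gamma\\0&1\end{smallmatrix}\bigr)$ and matching entries, which forces $d=x_{11}$ and hence needs $x_{11}\in R^*$. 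Handling the case $x_{11}\notin R^*$ — by first multiplying $X$ on the appropriate side by one unipotent factor that is then \emph{absorbed} into the next same-type factor rather than counted separately — is the crux, and is exactly where the Wedderburn-type strengthening and the one-dimensionality of $\Omega$ enter.
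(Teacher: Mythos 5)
Your plan stalls exactly at the point you yourself flag as the crux, and the obstruction is concrete. In the decomposition $X=\bigl(\begin{smallmatrix}1&0\\ \beta&1\end{smallmatrix}\bigr)\bigl(\begin{smallmatrix}d&0\\0&d^{-1}\end{smallmatrix}\bigr)\bigl(\begin{smallmatrix}1&\gamma\\0&1\end{smallmatrix}\bigr)$ the pivot is forced to be $d=x_{11}$, and the only unipotent multiplications that can alter $x_{11}$ are left multiplication by an \emph{upper} unipotent matrix (giving $x_{11}+yx_{21}$) or right multiplication by a \emph{lower} unipotent matrix (giving $x_{11}+yx_{12}$); left multiplication by a lower unipotent matrix fixes the first row, and right multiplication by an upper one fixes the first column. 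Hence the single correction factor supplied by $\bsr(\OO(\Omega))=1$ lands, after inversion, next to a factor of the \emph{opposite} triangular type: $X=U_0^{-1}LDU$ or $X=LDUL_0^{-1}$. It can never be absorbed into a same-type neighbour, and you are back to four exponentials, i.e.\ to the bound you already have from Theorem~\ref{t_sbr1_2} together with Corollaries~\ref{c_exp1} and \ref{c_glz_4}. The other conceivable rescue --- merging the diagonal factor with an adjacent unipotent one into a \emph{single} exponential --- also fails in general: since $\bigl(\begin{smallmatrix}g&m\\0&-g\end{smallmatrix}\bigr)^2=g^2I$, one has $\exp\bigl(\begin{smallmatrix}g&m\\0&-g\end{smallmatrix}\bigr)=\bigl(\begin{smallmatrix}e^{g}&m\sinh(g)/g\\0&e^{-g}\end{smallmatrix}\bigr)$, so realizing a triangular matrix with diagonal $(d,d^{-1})$, $d=e^{g}$, as one exponential requires its off-diagonal entry to vanish (to suitable order) at every point where $g\in\pi i\Zbb\setminus\{0\}$ --- an honest analytic obstruction, not a bookkeeping issue.

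The paper's mechanism for getting from four to three is entirely different and is the idea missing from your proposal: instead of making a pivot invertible, one makes the \emph{eigenvalues equal}. One seeks $\alpha\in\OO^*(\Omega)$ and $\beta\in\OO(\Omega)$ such that $Y=X\bigl(\begin{smallmatrix}\alpha^{2}&\beta\\0&1\end{smallmatrix}\bigr)$ has double eigenvalue, i.e.\ $(\alpha^2a+\beta c+d)^2=4\alpha^2$, solved formally by $\beta=(2\alpha-a\alpha^2-d)/c$. The one-dimensionality of $\Omega$ enters not through a Wedderburn-type argument but through Mittag-Leffler interpolation (Corollary~\ref{c_MLeff}, the Oka property of $\Cbb^*$): at each zero $z_i$ of $c$ one has $a(z_i)\neq 0$ (because $ad-bc=1$), so one can prescribe the jet of $\alpha$ to agree with $1/a$ to the order of vanishing of $c$, and simultaneously arrange $\alpha=\exp(\widetilde{\alpha})$ globally; this makes $\beta$ holomorphic. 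A matrix with a double zero-free eigenvalue whose determinant has a logarithm is a \emph{single} exponential (Lemma~\ref{l_double_ev}; for equal eigenvalues the exponential of a triangular matrix is $e^{\gamma/2}$ times a unipotent one, so the $\sinh(g)/g$ obstruction above disappears), and the correction matrix splits as $\bigl(\begin{smallmatrix}\alpha^{2}&\beta\\0&1\end{smallmatrix}\bigr)=\mathrm{diag}(\alpha,\alpha^{-1})\bigl(\begin{smallmatrix}\alpha&\beta\alpha^{-1}\\0&\alpha\end{smallmatrix}\bigr)$, two more exponentials --- three in total. So your proposal correctly assembles the standard ingredients ($\bsr=1$, logarithms of units, unipotent factors as exponentials) but these provably cannot be rearranged to yield three factors; the double-eigenvalue normalization plus jet interpolation is the essential new step.
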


We will need several auxiliary results. The first theorem is a classical one \cite{Flor}.

\begin{theorem}[Mittag-Leffler Interpolation Theorem]\label{t_MLeff}
Let $\Omega$ be an open Riemann surface
and let $\{z_i\}_{i=1}^\infty$ be a discrete closed subset of $\Omega$.
Assume that a finite jet
\begin{equation}\label{e_jet}
J_i(z) = \sum_{j=1}^{N_i} b_j^{(i)} (z- z_i)^j
\end{equation}
is defined in some local coordinates for every point $z_i$.
Then there exists $f\in \OO(\Omega)$ such that
\begin{equation}\label{e_jet_app}
f(z) - J_i(z) = o (|z- z_i|^{N_i})\quad \textrm{as\ } z\to z_i,\ i=1,2\dots.
\end{equation}
\end{theorem}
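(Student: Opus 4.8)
The plan is to deduce the theorem from the Stein property of open Riemann surfaces together with Cartan's Theorem~B, by realizing the prescribed jets as a global section of a skyscraper quotient sheaf and lifting that section to a holomorphic function on all of $\Omega$.

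First I would record the structural input. By the Behnke--Stein theorem, every open (noncompact) Riemann surface $\Omega$ is a Stein manifold of dimension one. Write $Z=\{z_i\}_{i=1}^\infty$; since $Z$ is discrete and closed in $\Omega$, it is locally finite. Around each $z_i$ fix the given local coordinate $w_i=z-z_i$, so that the jet $J_i$ is literally a polynomial and hence a genuine element of the local ring $\OO_{z_i}$. Let $\mathcal I\subset\OO$ be the ideal sheaf whose stalk at each $z_i$ consists of the germs vanishing to order at least $N_i+1$ (in the coordinate $w_i$ this is the principal ideal generated by $w_i^{N_i+1}$), and which equals $\OO$ at every other point. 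Then $\mathcal I$ is a coherent analytic sheaf, and the quotient $\mathcal Q=\OO/\mathcal I$ is supported on the discrete set $Z$, with stalk at $z_i$ the finite-dimensional space of jets $\OO_{z_i}/(w_i^{N_i+1})$ of order $\le N_i$.

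Next I would invoke the short exact sequence of sheaves $0\to\mathcal I\to\OO\to\mathcal Q\to 0$ and pass to the associated long exact cohomology sequence, whose relevant segment is $H^0(\Omega,\OO)\to H^0(\Omega,\mathcal Q)\to H^1(\Omega,\mathcal I)$. Because $\Omega$ is Stein and $\mathcal I$ is coherent, Cartan's Theorem~B gives $H^1(\Omega,\mathcal I)=0$; hence the map $H^0(\Omega,\OO)\to H^0(\Omega,\mathcal Q)$ is surjective. Since $\mathcal Q$ is a direct sum of skyscraper sheaves over the discrete set $Z$, its global sections are exactly the unrestricted products $\prod_i \OO_{z_i}/(w_i^{N_i+1})$, so the prescribed jets $(J_i)_i$ — whose constant terms are required to vanish — determine one such global section $\sigma$. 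Any preimage $f\in\OO(\Omega)$ of $\sigma$ then satisfies, for each $i$, that $f-J_i$ lies in $(w_i^{N_i+1})$ at $z_i$; in particular $f(z)-J_i(z)=O(|z-z_i|^{N_i+1})=o(|z-z_i|^{N_i})$ as $z\to z_i$, which is exactly \eqref{e_jet_app}.

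The main obstacle, and the one genuinely nontrivial input, is the vanishing $H^1(\Omega,\mathcal I)=0$, which rests on two deep facts: the coherence of the ideal sheaf $\mathcal I$ (Oka's coherence theorem) and the Steinness of $\Omega$ (Behnke--Stein) feeding into Cartan's Theorem~B. If one prefers to avoid the full sheaf machinery, the same conclusion can be reached more constructively: exhaust $\Omega$ by relatively compact Runge domains, solve the \emph{finite} interpolation problem on each by the classical Weierstrass/Mittag--Leffler construction, and then use Runge approximation — again a consequence of the Stein property — to pass to the limit while controlling the errors at the finitely many interpolation points active at each stage. The only delicate point in that approach is the bookkeeping required to handle the infinitely many points $z_i$ simultaneously, and it is precisely this bookkeeping that the cohomological formulation absorbs automatically.
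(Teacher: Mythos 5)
Your proof is correct, but note that the paper offers no argument of its own to compare against: it states this as a classical result and simply cites Florack's 1948 paper. Your cohomological route --- Behnke--Stein to get Steinness of $\Omega$, the locally principal (hence coherent) ideal sheaf $\mathcal I$ with stalks $(w_i^{N_i+1})$ at the $z_i$, Cartan's Theorem~B to kill $H^1(\Omega,\mathcal I)$, and the identification of $H^0(\Omega,\OO/\mathcal I)$ with the unrestricted product of finite jet spaces over the discrete closed set --- is the standard modern proof, and every step checks out, including the final estimate $f-J_i=O(|z-z_i|^{N_i+1})=o(|z-z_i|^{N_i})$. Your alternative sketch (exhaustion by Runge domains, finite interpolation on each piece, Runge approximation to pass to the limit) is essentially the classical constructive route closer in spirit to the cited reference, which predates Theorem~B; the sheaf formulation indeed buys you the bookkeeping for infinitely many points at no extra cost. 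One cosmetic remark: the paper's jet \eqref{e_jet} is summed from $j=1$, yet Corollary~\ref{c_MLeff} speaks of $b_0^{(i)}\neq 0$, so the intended sum surely starts at $j=0$; your proof handles arbitrary constant terms anyway, so nothing changes.
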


\begin{corollary}\label{c_MLeff}
Under assumptions of Theorem~\ref{t_MLeff}, suppose that $b_0^{(i)} \neq 0$ in \eqref{e_jet}
for $i=1,2,\dots$.
Then there exist $f, g\in \OO(\Omega)$ such that \eqref{e_jet_app} holds
and $f = e^g$.
\end{corollary}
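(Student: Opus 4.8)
The plan is to reduce the statement to the Mittag-Leffler Interpolation Theorem (Theorem~\ref{t_MLeff}) applied not to the given jets $J_i$, but to their logarithms. The decisive observation is that the hypothesis $b_0^{(i)}\neq 0$ makes each polynomial jet $J_i$ non-vanishing at $z_i$: for each $i$ I would fix a neighborhood of $z_i$ on which $J_i$ is zero-free, choose a holomorphic branch of the logarithm there, and form the holomorphic germ $L_i:=\log J_i$.

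Next I would let $K_i(z)$ denote the Taylor jet of $L_i$ at $z_i$ of order $N_i$ in the chosen local coordinate, so that $L_i(z)-K_i(z)=O(|z-z_i|^{N_i+1})$. Applying Theorem~\ref{t_MLeff} to the family of finite jets $\{K_i\}_{i=1}^\infty$ attached to the same discrete closed set $\{z_i\}$ yields a function $g\in\OO(\Omega)$ with $g(z)-K_i(z)=o(|z-z_i|^{N_i})$ as $z\to z_i$ for every $i$. I then set $f:=e^g\in\OO(\Omega)$, which is globally of the required exponential form with the prescribed $g$.

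It remains to verify the interpolation condition $f(z)-J_i(z)=o(|z-z_i|^{N_i})$. Writing $g=K_i+r_i$ with $r_i(z)=o(|z-z_i|^{N_i})$, one has $f=e^{K_i}e^{r_i}=e^{K_i}\bigl(1+o(|z-z_i|^{N_i})\bigr)$. Since $K_i$ is the truncated logarithm of $J_i$, the germ $e^{K_i}=J_i\,e^{-(L_i-K_i)}=J_i\bigl(1+O(|z-z_i|^{N_i+1})\bigr)$ agrees with $J_i=e^{L_i}$ to order $N_i$, that is $e^{K_i(z)}-J_i(z)=o(|z-z_i|^{N_i})$. Combining the two estimates and using that $J_i$ and $e^{K_i}$ are bounded near $z_i$ gives $f(z)-J_i(z)=o(|z-z_i|^{N_i})$, as required.

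The only genuinely delicate point is the bookkeeping of orders of vanishing through the exponential: one must check that truncating $\log J_i$ at order $N_i$ and then exponentiating recovers $J_i$ up to an $o(|z-z_i|^{N_i})$ error, and that the Mittag-Leffler remainder $r_i$, which is merely $o(|z-z_i|^{N_i})$ rather than a higher-order term, does not spoil this after multiplication by the bounded factor $e^{K_i}$. I note that the argument uses the constant term $b_0^{(i)}$ in \eqref{e_jet}; under the convention that the jet includes the $j=0$ summand, $b_0^{(i)}=J_i(z_i)$ is precisely the value whose non-vanishing licenses the local branch of the logarithm.
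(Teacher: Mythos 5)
Your proof is correct and takes essentially the same route as the paper: the paper's one-sentence argument observes that, since $b_0^{(i)}\neq 0$, the logarithm is a local biholomorphism inducing a bijection between jets of $f$ and jets of $g=\ln f$, so one interpolates the logarithmic jets via Theorem~\ref{t_MLeff} and exponentiates --- exactly your construction, with the order-of-vanishing bookkeeping that the paper leaves implicit carried out explicitly. Your closing remark is also apt: the sum in \eqref{e_jet} should start at $j=0$, since the hypothesis concerns $b_0^{(i)}=J_i(z_i)$.
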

\begin{proof}
Let $b_0= b_0^{(i)}$ for some $i$.
Since $b_0\neq 0$, there exists a logarithm $\ln$
in a neighborhood of $b_0$.
So, $\ln$ is a local biholomorphism which induces
a bijection between jets of $f$ and $g:= \ln f$.
\end{proof}

In ``modern'' language, the  proof of Corollary~\ref{c_MLeff}  uses the fact that $\Cbb^*$ is an Oka manifold
(we refer the interested reader to \cite{For}).
Thus
for any Stein manifold $X$ and an analytic subset $Y\subset X$,
a (jet of) holomorphic map $f: Y \to \Cbb^*$ (along $Y$)
extends to a holomorphic map $f: X \to \Cbb^*$
if and only if it extends continuously.
The obstruction for a continuous extension is an element of the relative homology group $H_2(X, Y, \mathbb{Z})$.
Observe that,
for any discrete subset $Y$ of a $1$-dimensional Stein manifold $X$,
we have
$H_2(X, Y, \mathbb{Z})=0$
because $H_2(X, \mathbb{Z}) = H_1 (Y, \mathbb{Z})=0$.
This is the point where the proof of Proposition~\ref{p_riemann} below breaks down when we replace the Riemann surface
$\Omega$ by a Stein manifold of higher dimension. Even a nowhere vanishing continuous function
$\alpha$, as in the proof, does not exist in general.

\begin{lemma}\label{l_double_ev}
Let $\Omega$ be an open Riemann surface and $X\in \glz(\OO(\Omega))$.
Assume that $\lambda\in \OO^*(\Omega)$ is the double eigenvalue of $X$
and $\det X$ has a logarithm in $\OO(\Omega)$.
Then $X$ is an exponential.
\end{lemma}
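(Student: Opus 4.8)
The plan is to exploit the fact that a $2\times 2$ matrix with a double eigenvalue differs from a scalar matrix by a square-zero nilpotent, and then to build the exponent explicitly rather than appealing to any homotopy or interpolation.

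First I would record the algebraic content of the eigenvalue hypothesis. Saying that $\lambda$ is the double eigenvalue of $X$ means that the characteristic polynomial of $X$ equals $(t-\lambda)^2$; in particular $\det X = \lambda^2$, and by the Cayley--Hamilton identity the matrix $N := X - \lambda I$ satisfies $N^2 = 0$. Since $\lambda\in\OO^*(\Omega)$, its inverse $\lambda^{-1}$ is holomorphic, so $M := \lambda^{-1} N \in M_2(\OO(\Omega))$ is well defined, obeys $M^2 = 0$, and gives the factorization $X = \lambda(I + M)$.

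The step I expect to be the crux is producing a holomorphic logarithm of $\lambda$ itself, since on an open Riemann surface a nonvanishing function need not admit one, and this is exactly why the hypothesis is placed on $\det X$ rather than on $\lambda$. Let $\ell\in\OO(\Omega)$ satisfy $e^\ell = \det X = \lambda^2$, and set $h := \lambda\, e^{-\ell/2}$. Then $h^2 = \lambda^2 e^{-\ell} = 1$, so $h$ is a holomorphic function taking values in $\{1,-1\}$; as $\Omega$ is connected, $h$ is constantly $1$ or constantly $-1$. In the first case $g := \ell/2$ satisfies $e^g = \lambda$, and in the second case $g := \ell/2 + i\pi$ does. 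Either way $\lambda$ admits a holomorphic logarithm $g\in\OO(\Omega)$.

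Finally I would assemble the exponent. Since $gI$ is a scalar matrix it commutes with $M$, and the relation $M^2=0$ forces the exponential series of $M$ to terminate after two terms, whence
\[
\exp(gI + M) = \exp(gI)\exp(M) = (e^g I)(I + M) = \lambda(I + M) = X .
\]
Thus $Y := gI + M = gI + \lambda^{-1}(X - \lambda I)$ is the desired matrix with $\exp Y = X$. The only genuinely nontrivial ingredient is the sign argument that manufactures $\log\lambda$ from $\log\det X$; the remainder is the termination of the exponential series for a square-zero matrix together with the commutativity of scalar matrices, both of which are routine.
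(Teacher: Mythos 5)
Your proof is correct, and it takes a genuinely different and more elementary route than the paper. You observe that the double-eigenvalue hypothesis means the characteristic polynomial is $(t-\lambda)^2$, so Cayley--Hamilton gives $(X-\lambda I)^2=0$ and the global decomposition $X=\lambda(I+M)$ with $M^2=0$; you then extract $\log\lambda$ from $\log\det X$ by the sign argument $h=\lambda e^{-\ell/2}$, $h^2=1$, $h$ constant by connectedness, and write down the exponent $gI+\lambda^{-1}(X-\lambda I)$ in closed form, with the exponential series terminating since $M^2=0$. The paper instead splits into two cases (identically diagonal or not) and, in the nondiagonal case, builds a holomorphic eigenvector $v_1=(-g,h)^{T}$ from a nonzero row of $X-\lambda I$, divides out its vanishing divisor by a function $f$ whose existence is a Weierstrass-type theorem on open Riemann surfaces, completes the resulting nowhere-zero vector to a matrix $P\in\glz(\OO(\Omega))$, conjugates $X$ to the upper-triangular form $\bigl(\begin{smallmatrix}\lambda&\beta\\0&\lambda\end{smallmatrix}\bigr)$, and exponentiates that. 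Your argument buys several things: it avoids the case distinction, the divisor construction, and the nontrivial completion of a unimodular column to an invertible holomorphic matrix, so it works verbatim on any connected complex manifold rather than using Riemann-surface-specific tools; moreover, you make explicit the sign adjustment $g=\ell/2$ versus $g=\ell/2+i\pi$ needed to get $e^{g}=\lambda$ rather than $-\lambda$, a point the paper's formula $\exp\bigl(\begin{smallmatrix}\gamma/2&\beta/\lambda\\0&\gamma/2\end{smallmatrix}\bigr)$ leaves implicit. What the paper's approach exhibits in exchange is the geometric eigenvector structure of $X$, which fits the surrounding narrative but is not needed for the conclusion.
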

\begin{proof}
We consider two cases.
\paragraph{Case 1:}
$X(z)$ is a diagonal matrix for all $z\in\Omega$.

We have
\[
X(z)=
 \begin{pmatrix}
  \lambda(z) &0\\
  0         &\lambda(z)
  \end{pmatrix}
= \exp
\begin{pmatrix}
  \alpha(z) &0\\
  0         &\alpha(z)
  \end{pmatrix}.
\]

\paragraph{Case 2:}
$X(z)$ is not identically diagonal.

Either the first or the second line in $X(z) - \lambda(z)I$,
say $(h(z), g(z))$, is not identical zero. So,
\[
v_1(z)=
 \begin{pmatrix}
  -g(z) \\
  h(z)
  \end{pmatrix}
\]
is a holomorphic eigenvector for $X(z)$ except those
points $z\in\Omega$ for which $v_1(z) = \mathbf{0}$.
Construct a function $f(z)\in \OO(\Omega)$ such that
its vanishing divisor is exactly $\min(\textrm{ord\,} g, \textrm{ord\,} h)$.
Then
\[
v(z) = \frac{1}{f(z)} v_1 (z)
\]
is a holomorphic eigenvector for $X(z)$, $z\in\Omega$.

Now, choose a matrix $P(z) \in \glz(\OO(\Omega))$
with first column $v(z)$.
Then the matrix $P^{-1}(z) X(z) P(z)$
has the following form:
\[
 \begin{pmatrix}
  \lambda(z) &\beta(z)\\
  0          &\lambda(z)
  \end{pmatrix}
  =
  \exp
\begin{pmatrix}
  \frac{1}{2}\gamma(z) &\frac{\beta(z)}{\lambda(z)}\\
  0                    &\frac{1}{2}\gamma(z)
  \end{pmatrix}
\]
Thus,
\[
X(z)= \exp
P(z)
\begin{pmatrix}
  \frac{1}{2}\gamma(z) &\frac{\beta(z)}{\lambda(z)}\\
  0                    &\frac{1}{2}\gamma(z)
  \end{pmatrix}
  P^{-1}(z),
\]
as required.
\end{proof}

\begin{proof}[Proof of Proposition~\ref{p_riemann}]
Let
\[
X=
\begin{pmatrix}
  a      &b \\
  c      &d
\end{pmatrix}
\in \slz(R),
\]
that is, $ad - bc =1$.
We are looking for $\alpha\in R^*$ and $\beta\in R$ such that
the matrix
\[
X
\begin{pmatrix}
  \alpha^2      &\beta \\
  0             &1
\end{pmatrix}
=
\begin{pmatrix}
  \alpha^2 a      &\beta a + b \\
  \alpha^2 c      &\beta c + d
\end{pmatrix}
:=Y
\]
has a double eigenvalue.

\paragraph{Case 1:} $c=0$.
We have
\[
X=
\begin{pmatrix}
  a      &b \\
  0      &a^{-1}
\end{pmatrix}.
\]
It suffice to observe that
\[
\begin{pmatrix}
  a      &b \\
  0      &a^{-1}
\end{pmatrix}
\begin{pmatrix}
  a^{-2} &0 \\
  0      &1
\end{pmatrix}
=
\begin{pmatrix}
  a^{-1}      &b \\
  0           &a^{-1}
\end{pmatrix}
\]
has the double eigenvalue $a^{-1}$.

\paragraph{Case 2:} $c\neq 0$.
The matrix $Y$ has a double eigenvalue if $4 \det Y = (\mathrm{tr\,} Y)^2$,
that is,
\begin{equation}\label{e_double_ev}
(\alpha^2 a + \beta c + d)^2 = 4\alpha^2.
\end{equation}
Put
\[
\beta= \frac{2\alpha - a\alpha^2 - d}{c}.
\]
Clearly, $\beta$ is a formal solution of \eqref{e_double_ev}.
Below we show how to construct
$\alpha(z) =\exp(\widetilde{\alpha}(z))\in \OO^*(\Omega)$
such that $\beta$ is holomorphic.

Let $\{z_i\}\subset \Omega$ be the zero set of $c(z)$.
Fix $i$ and $z_i\in \Omega$.  Let $c(z_i) = \dots = c^{(k)}(z_i)=0$, and $c^{(k+1)}(z_i)\neq 0$.
Observe that $a(z_i)\neq 0$. So, define $\alpha(z)$, in a neighborhood of $z_i$, as
$1/a(z)$ up to a sufficiently high order, namely,
\begin{equation}\label{e_a_al}
a(z)\alpha(z) = 1 + (z-z_i)^k h(z),
\end{equation}
where $h(z)$ is holomorphic in a neighborhood of $z_i$.
Since $ad -bc =1$, we have $1-ad = (z-z_i)^k g(z)$. Therefore,
\begin{align*}
  2a\alpha - a^2 \alpha^2 -ad
&= -(1-a\alpha^2)^2 + 1- ad \\
&=- (z-z_0)^{2k} h^2(z) + (z-z_0)^k g(z)
\end{align*}
vanishes of order $k$ at $z_i$.
Hence, $2\alpha - a\alpha^2 - d$ also vanishes of order $k$ at $z_i$.

So, we have constructed $\alpha(z)$ locally as finite jets
$J_i(z)$ defined by \eqref{e_jet} with $b_0^{(i)}\neq 0$
in some local coordinates for every point $z_i$, $i=1,2,\dots$.
Now, Corollary~\ref{c_MLeff} provides $\widetilde{\alpha}\in \OO(\Omega)$ such that
$\alpha(z) =\exp(\widetilde{\alpha}(z))\in \OO^*(\Omega)$ and \eqref{e_a_al} holds.
Hence, $\beta$ is holomorphic.

So, the matrix
\[
X
\begin{pmatrix}
  \alpha^2      &\beta \\
  0             &1
\end{pmatrix}
:=Y
\]
has a double eigenvalue and $\det Y$ admits a logarithm.
Thus, applying Lemma~\ref{l_double_ev}, we conclude that $Y$ is an exponential.
To finish the proof of the proposition, it remains observe that
\[
\begin{pmatrix}
  \alpha^2      &\beta \\
  0             &1
\end{pmatrix}
=
\begin{pmatrix}
  \alpha      &0 \\
  0           &\alpha^{-1}
\end{pmatrix}
\begin{pmatrix}
  \alpha      &\beta\alpha^{-1} \\
  0            &\alpha
\end{pmatrix},
\]
where both multipliers on the right hand side are exponentials.
\end{proof}

\begin{corollary}
Let $X\in \glz(\OO(\Omega))$.
The following properties are equivalent:
\begin{itemize}
  \item[(i)] $X$ is a product of $3$ exponentials;
  \item[(ii)] $\det X$ is an exponential;
  \item[(iii)] $X$ is null-homotopic.
\end{itemize}
\end{corollary}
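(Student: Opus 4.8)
The plan is to prove the cycle of implications (i) $\Rightarrow$ (iii) $\Rightarrow$ (ii) $\Rightarrow$ (i), which already yields the three-fold equivalence. The first arrow is immediate: if $X=\exp G_1\exp G_2\exp G_3$ with $G_j\in M_2(\OO(\Omega))$, then $X_t=\exp(tG_1)\exp(tG_2)\exp(tG_3)$ is a path in $\glz(\OO(\Omega))$ joining the identity at $t=0$ to $X$ at $t=1$, so $X$ is null-homotopic. I also record the elementary fact behind (i) $\Rightarrow$ (ii), which I will reuse: since $\det\exp G=\exp(\mathrm{tr}\,G)$, any product of exponentials has determinant $\exp(\mathrm{tr}\,G_1+\mathrm{tr}\,G_2+\mathrm{tr}\,G_3)$, an exponential.

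For (iii) $\Rightarrow$ (ii) I would use the topology of $\OO^*(\Omega)$. A null-homotopy $X_t$ of $X$ pushes forward to a path $\det X_t$ in $\OO^*(\Omega)$ from the constant function $1$ to $\det X$. Fix a loop $\gamma\subset\Omega$; the winding number $\frac{1}{2\pi i}\oint_\gamma d\log(\det X_t)$ is an integer for every $t$ and varies continuously with $t$ (the functions $\det X_t$ stay nowhere vanishing and converge uniformly with their derivatives on the compact set $\gamma$), hence is constant and equals its value $0$ at $t=0$. Thus every period of the closed holomorphic $1$-form $d\log\det X$ vanishes, so this form is exact on the Riemann surface $\Omega$; integrating it produces $g\in\OO(\Omega)$ with $\det X=e^{g}$, which is (ii). Alternatively, Corollary~\ref{c_stein} already gives that a null-homotopic $X$ is a product of finitely many exponentials, and then the determinant computation above yields (ii) directly.

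The implication (ii) $\Rightarrow$ (i) is where the bound of three exponentials must be respected, and I expect this to be the crux. Writing $\det X=e^{g}$ with $g\in\OO(\Omega)$, I would normalize by the scalar $e^{-g/2}$: the matrix $X_0:=e^{-g/2}X$ satisfies $\det X_0=e^{-g}\det X=1$, so $X_0\in\slz(\OO(\Omega))$. Proposition~\ref{p_riemann} then gives $X_0=\exp G_1\exp G_2\exp G_3$. Consequently $X=e^{g/2}X_0=\exp\!\big(\tfrac{g}{2}I\big)\exp G_1\exp G_2\exp G_3$, and the key observation is that $\tfrac{g}{2}I$ is a scalar matrix, hence commutes with $G_1$; therefore $\exp(\tfrac{g}{2}I)\exp G_1=\exp(\tfrac{g}{2}I+G_1)$ and $X=\exp(\tfrac{g}{2}I+G_1)\exp G_2\exp G_3$ is a product of exactly three exponentials.

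The main obstacle is thus not in producing exponentials but in controlling their number: a naive reduction to $\slz$ spends one exponential on the determinant and a further three on Proposition~\ref{p_riemann}, giving four. The point is that the determinant-correcting factor is a scalar matrix and so can be folded into one of the three exponentials supplied by Proposition~\ref{p_riemann}, keeping the total at three. The only other non-formal input is the mild topological fact used in (iii) $\Rightarrow$ (ii): on an open Riemann surface the connected component of $1$ in $\OO^*(\Omega)$ consists exactly of the exponentials.
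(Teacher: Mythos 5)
Your proof is correct and follows essentially the same route as the paper: the cycle (i)$\Rightarrow$(iii)$\Rightarrow$(ii)$\Rightarrow$(i), with (ii)$\Rightarrow$(i) done exactly as in the proof of Corollary~\ref{c_exp2} (normalize by $e^{-g/2}$, apply Proposition~\ref{p_riemann}, and absorb the scalar matrix $\tfrac{g}{2}I$ into the first exponential). Your winding-number/period argument for (iii)$\Rightarrow$(ii) is just an elementary unpacking of the paper's one-line appeal to $\exp\colon\Cbb\to\Cbb^*$ being a covering, so the two proofs coincide in substance.
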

\begin{proof}
Clearly, (i)$\Rightarrow$(iii).
Now, assume that $X$ is null-homotopic.
Then $\det X$ is homotopic to the function $f\equiv 1$.
Since $\exp: \Cbb \to \Cbb^*$ is a covering,
we conclude that $\det X(z) =\exp(h(z))$ with $h\in\OO(\Omega)$.
So, (iii) implies (ii). The implication (ii)$\Rightarrow$(i)
is standard; see, for example, the proof of Corollary~\ref{c_exp2}.
\end{proof}

\end{document}